\theoremstyle{plain}
\newtheorem{theorem}{Theorem}[section]
\newtheorem{lemma}[theorem]{Lemma}
\theoremstyle{definition}
\theoremstyle{remark}
\newtheorem{remark}[theorem]{Remark}
\begin{document}

%
\runningtitle{PK-MIQP}

%

\twocolumn[

\aistatstitle{Global Optimization of Gaussian Process Acquisition Functions Using a Piecewise-Linear Kernel Approximation}

\aistatsauthor{Yilin Xie \And Shiqiang Zhang \And Joel A. Paulson \And Calvin Tsay}

\aistatsaddress{Imperial College London \And  Imperial College London \And The Ohio State University \And Imperial College London} 
]

\begin{abstract}
    Bayesian optimization relies on iteratively constructing and optimizing an acquisition function. The latter turns out to be a challenging, non-convex optimization problem itself. Despite the relative importance of this step, most algorithms employ sampling- or gradient-based methods, which do not provably converge to global optima. 
    This work investigates mixed-integer programming (MIP) as a paradigm for \textit{global} acquisition function optimization. Specifically, our Piecewise-linear Kernel Mixed Integer Quadratic Programming (PK-MIQP) formulation introduces a piecewise-linear approximation for Gaussian process kernels and admits a corresponding MIQP representation for acquisition functions. The proposed method is applicable to uncertainty-based acquisition functions for any stationary or dot-product kernel. We analyze the theoretical regret bounds of the proposed approximation, and empirically demonstrate the framework on synthetic functions, constrained benchmarks, and a hyperparameter tuning task.

\end{abstract}

\section{INTRODUCTION}\label{sec:introduction}

Optimization of black-box functions is a compelling task in many scientific fields, such as hyperparameter tuning for machine learning~\citep{Ranjit2019DLBO}, routing for robot control problems~\citep{Nambiar2022BOrobot}, designing energy systems~\citep{thebelt2022multi}, and drug discovery~\citep{Colliandre2024BOdrug}. These tasks share challenges such as limited knowledge about the true underlying objective function and expensive evaluations on the target function, i.e., experiments. Given the latter, finding a high-quality sample point at each evaluation becomes extremely valuable. 

Bayesian optimization (BO) is a popular class of algorithms designed for this setting. BO can be divided into two main components: (1) a Bayesian model of the objective function (usually a Gaussian process (GP)), and (2) an acquisition function to decide which point $\bm{x^*}$ to sample next. An acquisition function is a mathematical expression that quantifies the value of evaluating a particular point in the search space. 
The choice of acquisition function balances exploration and exploitation~\citep{paulson2024bayesian}. 
Mathematical optimization of the acquisition function is thus a key step of BO. While many works formulate more indicative and sophisticated acquisition functions, comparatively less attention has been given to computing the optimal solutions of acquisition functions. While a well-formulated acquisition function is critical, its reliable optimization can be just as beneficial for BO performance~\citep{Wilson2018MaxAcq,kim2021local}.

\begin{figure*}[t]
  \includegraphics[width=0.49\textwidth]{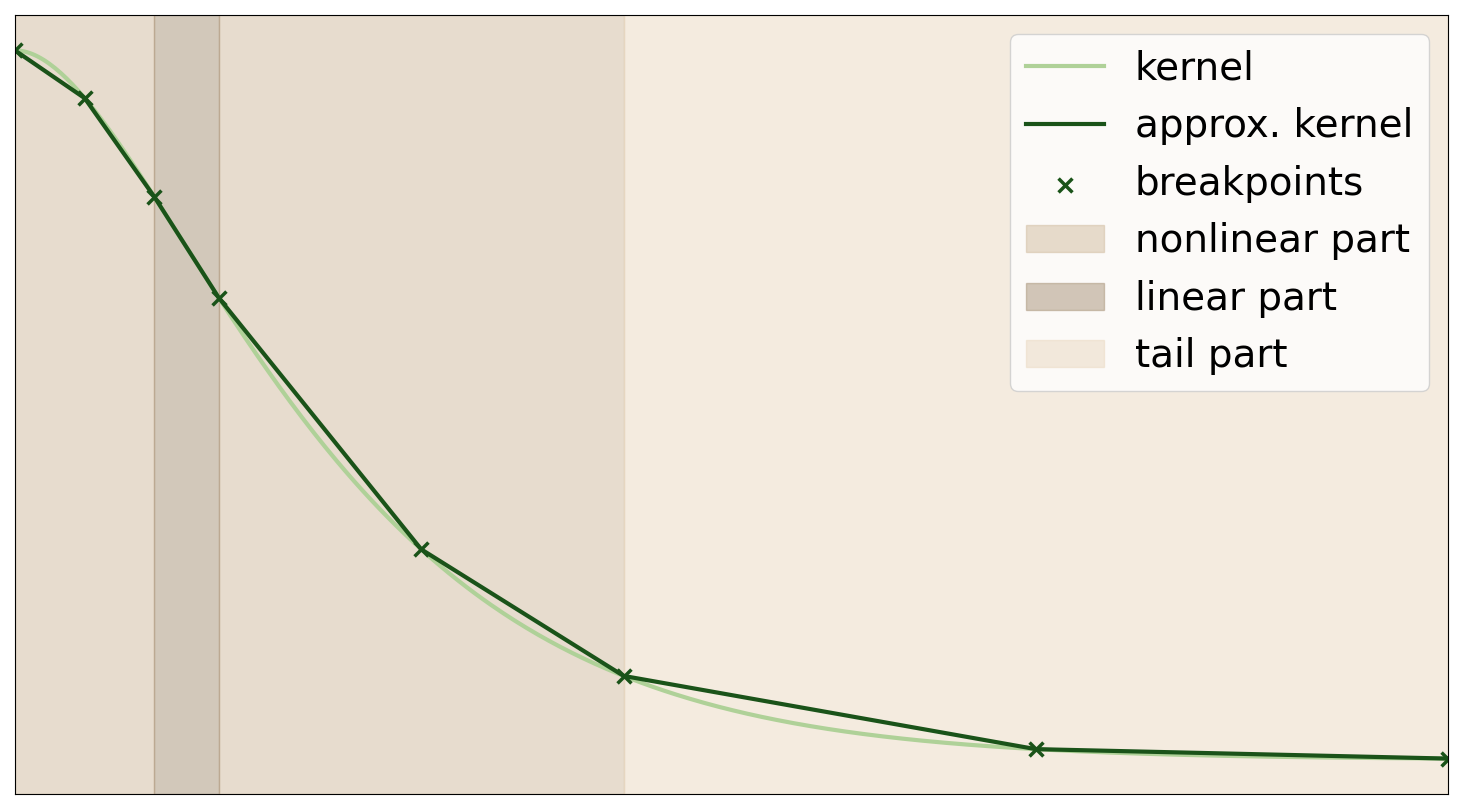}
   \includegraphics[width=0.49\textwidth]{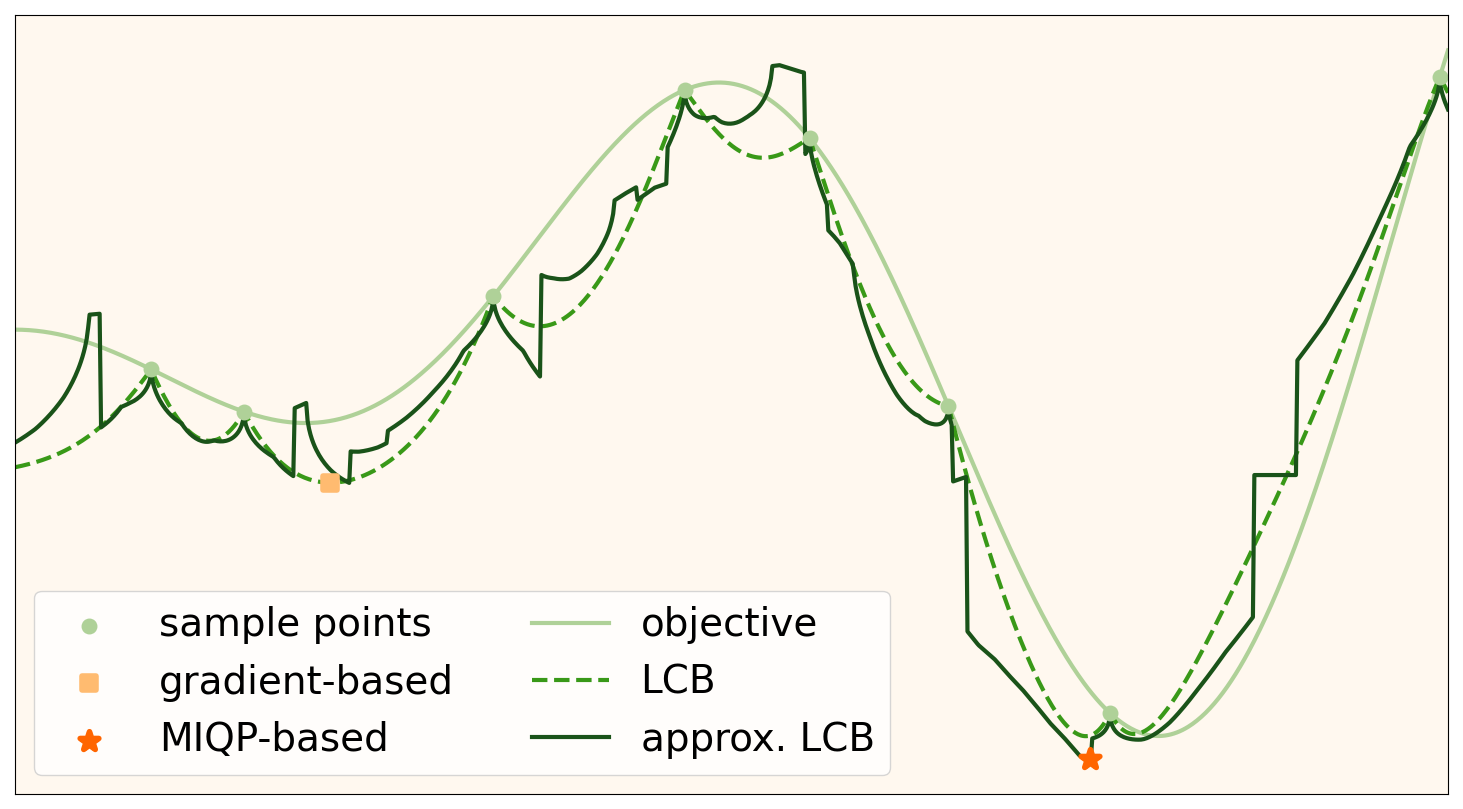}
  \caption{(\textbf{left}) Illustration of piecewise linear approximation of kernel function. (\textbf{right}) Visualization of the effect of kernel approximation on LCB acquisition function. The solution from gradient-based method (orange square) may end up at a local minimum, a sampling-based solution can miss the global minimum, and optimizing approximated LCB using global model (red star) will provide the global solution.}
  \label{idea_illus}
\end{figure*}

Gradient- and sampling-based methods remain the mainstream for acquisition function optimization. While the objective function of BO is a black box, closed-form expressions are available for many acquisition functions, e.g., Upper Confidence Bound (UCB) \citep{Srinivas2010GP}, Expected Improvement (EI) \citep{Jones1998EfficientGO}. 
Therefore, mathematical programming, especially (stochastic) gradient-based methods such as L-BFGS-B \citep{Zhu1997LBFGS} can perform well given first- and sometimes second-order derivatives. However, derivative information can be unreliable, e.g., due to numerical issues \citep{Ament2024unexpected}. Gradient-based methods can also be trapped at local optima and thus return sub-optimal solutions. 
On the other hand, \textit{sampling} methods 
are invariably limited by the curse of dimensionality and are also likely to return sub-optimal solutions, given limited evaluation budgets. 
Algorithms that provide a \textit{global} optimality guarantee for solving acquisition functions are thus lacking.

An alternative class of \textit{deterministic} methods is mixed-integer programming (MIP)~\citep{belotti2013mixed}, where established algorithms can globally optimize an objective function subject to various constraints. Conditions of a given problem are expressed as linear, quadratic, nonlinear and/or integral constraints and are passed to modern solvers that utilize branch-and-bound (B\&B) algorithms. 
By bounding the objective value, B\&B algorithms provide some guarantees on the global optimality of the solution. To the best of our knowledge, the application of MIP in BO has only been studied by \citet{Schweidtmann2021GPMIP}. This work further explores MIP-based global optimization for BO, specifically with novel kernel approximation techniques tailored to the solution methods.

In this paper, we propose Piecewise-linear Kernel Mixed Integer Quadratic Programming (PK-MIQP), a global optimization framework for GP acquisition functions. Specifically, we introduce a piecewise linear approximation of the GP kernel function (Figure \ref{idea_illus}, left) that enables a mixed-integer quadratic programming (MIQP) formulation of the acquisition function. 
Note that the model becomes mixed-integer linear if only the GP mean is required. 
Lower confidence bound (LCB) is chosen as a representative example to demonstrate the proposed framework. We then use a B\&B solver to globally optimize the approximated acquisition function. While gradient-based methods can return sub-optimal solutions without further indication (Figure \ref{idea_illus}, right), our method approximates the global optimum within a bounded neighborhood. The main contributions of this work are as follows:

\begin{enumerate}
    \item We propose a MIP-motivated piecewise linear approximation for GP kernel functions in stationary or dot-product form.
    \item We present an MIQP formulation to solve the resulting acquisition function optimization problems with global optimality guarantees.
    \item We theoretically analyze the performance and worst-case error of PK-MIQP. 
    \item We embed PK-MIQP in a full BO procedure and evaluate its performance against state-of-the-art optimization methods on tasks including synthetic functions, constrained benchmarks, and a hyperparameter tuning task.
\end{enumerate}

\section{RELATED WORK}\label{sec:related_work}
Recent research in the field of BO often focuses on solving long-standing problems, such as scaling BO to perform on problems with higher dimensionalities \citep{Spagnol2019Varselect, Cartis2021RanEmbed, Eriksson2021saas} and improving computational efficiency through more informative acquisition functions \citep{Oh2019bock, Ath2021greed, Ament2024unexpected}. While these innovations can be important and effective, the acquisition functions in these works remain optimized using classic and traditional methods. 

Gradient-based methods such as L-BFGS-B \citep{Zhu1997LBFGS} and stochastic gradient ascent \citep{Kingma2017adam} are popular choices to optimize acquisition functions in BO. For example, \citet{Eriksson2021saas} optimize the EI acquisition function based on a sparse axis-aligned subspace GP using L-BFGS-B \citep{Zhu1997LBFGS}. \citet{Oh2019bock} propose a cylindrical transformation of the search space to allow the chosen acquisition function to explore more near the center of search space using the Adam algorithm \citep{Kingma2017adam}. Though widely-used, solving acquisition functions using gradient-based methods has its limitations. \citet{Ament2024unexpected} highlight the issue of vanishing values and gradients when applying gradient-based methods to acquisition function optimization. Specifically, improvement-based acquisition functions (e.g., EI) can suffer from numerically zero acquisition values and gradients, making gradient-based methods return sub-optimal solutions due to lack of gradient information. \citet{Ament2024unexpected} thus propose a reformulation of the EI acquisition family, termed LogEI, where functions retain the same optima, but more stable in terms of gradient values; the authors demonstrated performance in BO using L-BFGS-B to solve the LogEI acquisition functions. \citet{Daulton2022ProbReparam} also address the limitations of gradient-based methods on maximizing acquisition functions in discrete and mixed search space. The authors propose a novel probabilistic representation of the acquisition function, where continuity of parameters in the acquisition function is restored, enabling the use of the gradient-based method Adam. Popular BO tools, e.g., BoTorch \citep{balandat2020botorch}, often consider multi-start gradient approaches to overcome local optima. However, these approaches may have difficulty in determining the non-flat regions of acquisition functions, especially in higher dimensional settings \citep{rana2017egp, adebiyi2024tsroots}.

As an alternative direction, sampling methods such as Nelder-Mead \citep{Nelder1965neldermead} are also a popular tool for acquisition function optimization in BO methods. While gradient-based methods suffer from difficulties in getting reliable gradient information, sampling methods mainly suffer from the curse of dimensionality. Here, \citet{Kandasamy2015AGP} propose an additive structure to model the objective function to exploit the efficiency of sampling methods in low-dimensional problems. \citet{Eriksson2020scalable} employ Thompson sampling in local trust regions and search for global optima through multiple local search. 

Global optimality in BO has primarily been discussed in terms of the black-box objective function, rather than the acquisition function evaluated at each iteration. Towards the latter, some researchers heuristically search for global optimality through multi-start local methods \citep{Eriksson2020scalable, Mathesen2021AdapRestart} or evolutionary algorithms such as Firefly \citep{song2024vizier}, CMA-ES \citep{hansen2006cma, Wilson2018MaxAcq} and TSEMO \citep{Bradford2018tsemo}. MIP comprises a well-known paradigm to solve optimization problems globally, but to-date has been rarely used in BO. Some works apply MIP to solve acquisition functions based on more compatible surrogate models, e.g., neural networks \citep{Papalexopoulos2022discreteMIP} or decision trees \citep{thebelt2022multi}. MIP is also used to handle discrete decisions in hybrid/combinatorial BO \citep{baptista2018bocs, daxberger2020mixBO, deshwal2021mercer}. Relatively few works apply MIP with GP-based surrogate models in BO. \citet{thebelt2022tree} formulate GPs using tree kernels and solve the resulting acquisition function as an MIQP. \citet{Schweidtmann2021GPMIP} formulate acquisition functions for smooth-kernel GPs as a mixed-integer nonlinear program (MINLP), which is then solved using a B\&B algorithm. However, MINLP can easily exceed computational budgets in real applications. 
This work seeks a more stable and practical algorithm for standard GPs that combines some global optimality guarantees, with more realistic computing times. 


\section{BACKGROUND}\label{sec:background}


\subsection{Gaussian Processes}\label{subsec:GP}
A Gaussian process (GP) models a joint multivariate Gaussian distribution over some random variables \citep{Schulz2018GPtutorial}. A GP is fully specified by a prior mean function $\mu(\cdot)$ and a kernel $K(\cdot,\cdot)$:
\begin{equation*}
    f(\cdot) \sim \mathcal{GP}(\mu(\cdot), K(\cdot, \cdot))
\end{equation*}
Normally the prior mean is set to zero to simplify the subsequent posterior computation, leaving only the kernel function. Common choices here are the squared exponential (SE) kernel or $Mat\acute{e}rn$ kernel. 


GP regression can be viewed as a Bayesian statistical approach for modelling and predicting functions. Given $t$ observed data points $\{(\bm{x_i},f(x_i))\}_{i=1}^t$, we denote $\bm{X}=[\bm{x_1},\dots,\bm{x_t}]$, $\bm{y}=[f(\bm{x_1}),\dots,f(\bm{x_t})]$.
For a new point $\bm{x} \in \mathbb{R}^n$, the posterior mean and variance of its function value is given by \citet{Frazier2018BOtutorial}:
\begin{equation}\label{eq:GP_mean_variance}
    \begin{aligned}
        \mu_t(\bm{x})&=K_{xX}K_{XX}^{-1}\bm{y} \\
        \sigma_t^2({\bm x})&= K_{xx}-K_{xX}K_{XX}^{-1}K_{Xx}
    \end{aligned}
\end{equation}
where we omit the conditioning on $(\bm{X}, \bm{y})$ and use $K_{XY}=K(\bm{X},\bm{Y})$ for simplicity. Note that $K_{xx}$ is the kernel variance $\sigma_f^2$.

When the observed output is noisy, i.e., $y=f(\bm{x})+\epsilon$, we assume $\epsilon\sim\mathcal N(0,\sigma_\epsilon^2)$, giving a posterior mean and variance of:
\begin{equation}\label{eq:GP_mean_variance_with_noise}
    \begin{aligned}
        \mu_t(\bm{x})&=K_{xX}(K_{XX}+\sigma_\epsilon^2\bm{I})^{-1}\bm{y} \\
        \sigma_t^2({\bm x})&= K_{xx}-K_{xX}(K_{XX}+\sigma_\epsilon^2\bm{I})^{-1}K_{Xx}
    \end{aligned}
\end{equation}
Nevertheless, for simplicity, we omit the noise term and assume that $\|K_{XX}\|_2\ge \sigma^2_\epsilon$ in our proofs.

\subsection{Bayesian Optimization}\label{subsec:BO}


A general BO procedure constructs an acquisition function (AF) based on the above GP and seeks to maximize its value. 
We consider the classic lower confidence bound (LCB) acquisition function:
\begin{equation*}
    \alpha_\mathit{LCB}(\bm{x}) = \mu_t(\bm{x}) - \beta_t^{1/2} \sigma_t(\bm{x})
\end{equation*}
Here $\mu_t$ and $\sigma_t$ are the posterior mean and variance from surrogate model, and $\beta_t$ is a hyperparameter that controls exploration and exploitation of the acquisition function. The next sample point proposed by acquisition function is then evaluated on the objective function, adding to the sample data set for next round of surrogate modelling and acquisition evaluation. Gaussian process regression and evaluation on acquisition function are performed repeatedly during the optimization process until a pre-defined budget on number of iterations is achieved.

\subsection{Mixed-Integer Quadratic Programming}\label{subsec:MIQP}

Mixed-integer quadratic programming (MIQP) considers problems with quadratic constraints/objectives:
\begin{equation*}
    \begin{aligned}
        \min\limits_{\bm{x}\in\mathbb R^D}~& \bm{x^TQx}+\bm{q^Tx}
        && \text{(objective)}\\
        s.t.~& \bm{Ax}\le\bm{b}
        && \text{(linear)}\\
        &\bm{x^TQ_jx}+\bm{q_j^Tx}\le b_j,~\forall j\in \mathcal J 
        && \text{(quadratic)}\\
        &x_i\in\mathbb Z,~\forall i\in\mathcal I\subset\{1,\dots, D\}
        && \text{(integrality)}\\
        &\bm{x}_L\le\bm{x}\le\bm{x}_U
        && \text{(bounds)}\\
    \end{aligned}
\end{equation*}

Section \ref{subsec:MIQP_formulation} presents our MIQP formulation for LCB with approximated posterior mean and variance. Several commercial solvers for MIQP are available; we use Gurobi v11.0.0 \citep{gurobi2024}.

\section{METHODOLOGY}\label{sec:methodology}

\subsection{Piecewise linearization of kernel}\label{subsec:pwl_kernel}

This section introduces our proposed piecewise-linear approximation for stationary or dot-product kernel functions. Consider a kernel function $k(r): \mathbb{R}^+_0 \rightarrow \mathbb{R}^+$, where $r$ is some distance measure or dot-product between data points $\bm{x}, \bm{x'}\in\mathbb{R}^D$. Intuitively, fewer linear segments are needed for approximately-linear parts of the function. We therefore select piecewise-linear breakpoints based on the curvature of $k(\cdot)$, which is proportional to its second derivative. Specifically, we first set a threshold $\epsilon_k$ to define ``near-linear'' parts, i.e., segments  of the kernel with $|k''(r)|\leq \epsilon_k$. 

The threshold value can be selected to balance the trade-off between the number of piecewise-linear segments and the approximation accuracy. We empirically choose $\epsilon_k$ to be half of the maximal value of $k''(r)$, which bounds the approximation error $e_{approx}$ to relatively small values. Future work could investigate a more systematic derivation of this threshold value. Figure \ref{pwl_illus} visualizes our piecewise linearization strategy using the Mat{\'e}rn 3/2 kernel as an example.

\begin{figure}[!htb]
  \includegraphics[width=\columnwidth]{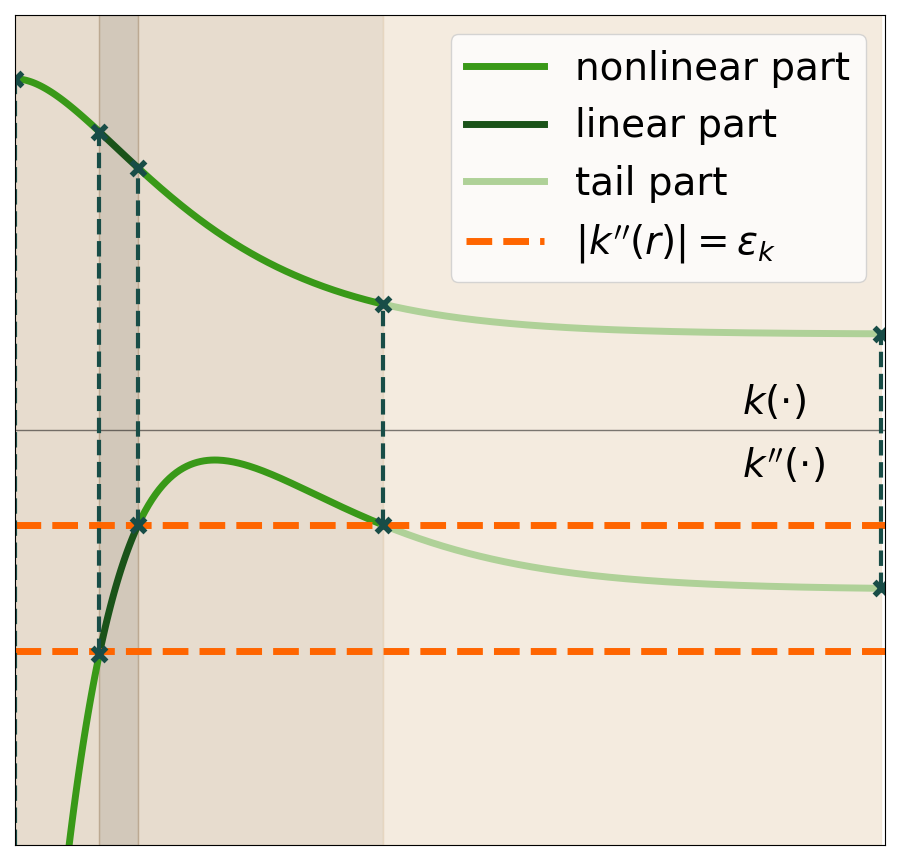}
  \caption{(\textbf{top}) Mat{\'e}rn 3/2 kernel function divided into 3 parts. (\textbf{bottom}) The second-order derivative of Mat{\'e}rn 3/2 kernel function. Parts within threshold are considered as ``near-linear.''}
  \label{pwl_illus}
\end{figure}

For stationary kernels enjoying similar shape to the Mat{\'e}rn 3/2 kernel function, e.g., RBF, Mat{\'e}rn 5/2, the kernel is partitioned into three parts:
\begin{equation*}
    \underbrace{[r_0,r_1]\cup [r_2,r_3]}_{R_\mathrm{nonlinear}} \cup \underbrace{[r_1,r_2]}_{R_\mathrm{linear}} \cup \underbrace{[r_3,r_{4}]}_{R_\mathrm{tail}}
\end{equation*}
where $r_1, r_2, r_3$ are derived by solving $|k''(r) | = \epsilon_k$, and $r_0, r_{4}$ denote the minimum and maximum values of $r$.

Based on the above, we use $D$ segments in the domain of $R_\mathrm{linear}$ and $2D$ segments for both $R_\mathrm{nonlinear}$ and $R_\mathrm{tail}$. We use $2D$ segments in $R_{tail}$ due to its relatively large range. The final set of breakpoints is:
\begin{equation*}
    \begin{aligned}
        R&=S_{r_0,r_1}^{2D}\cup S_{r_1,r_2}^D\cup S_{r_2,r_3}^{2D}\cup S_{r_3,r_4}^{2D}\cup\{r_4\}\\
        &=\{R_0,\cdots, R_M~|~R_i<R_j,~\forall i < j\}
    \end{aligned}
\end{equation*}
where $S_{l,r}^n$ denotes the set of $n$ points evenly spaced over interval $[l,r)$, and $M$ is the number of segments.

Using the approximated kernel, denoted by $\tilde k(\cdot)$, we can now define the approximated posterior mean $\tilde{\mu}$ and variance $\tilde{\sigma}^2$ analogously to \eqref{eq:GP_mean_variance}:
\begin{equation}\label{eq:GP_approximated_mean_variance}
    \begin{aligned}
    \tilde{\mu}(\bm{x})&:=\tilde{K}_{xX}\tilde{K}_{XX}^{-1}\bm{y}\\
    \tilde{\sigma}^2(\bm{x})&:=\tilde K_{xx}-\tilde{K}_{xX} \tilde{K}_{XX}^{-1}\tilde{K}_{Xx}
    \end{aligned}
\end{equation}
which then give us the approximated LCB:
\begin{equation}\label{eq:approx_LCB}
    \begin{aligned}
        \tilde\alpha_\mathit{LCB}(\bm{x})=\tilde \mu(\bm{x})-\beta_t^{1/2}\tilde \sigma(\bm{x})
    \end{aligned}
\end{equation}
where by convention we take $\beta_t = 0.2 D \log{2t}$ for its convergence properties~\citep{Kandasamy2015AGP}.

\begin{table}[ht!]
    \centering
    \caption{Piecewise linearization parameter values for Mat{\'e}rn 3/2 and RBF kernels.}
    \label{tab:pwl_params}
    \begin{tabular}{ccc}
        \toprule
        Variables & Mat{\'e}rn 3/2 & RBF \\
        \midrule
        $\epsilon_k$ & $\frac{3}{2}\exp(-2)\sigma_f^2$ & $\exp(-\frac{3}{2}) \sigma_f^2$\\
        $e_{approx}$ & $0.025 \sigma_f^2$ & $0.022\sigma_f^2$\\
        $r_1$ & 0.4866 & 0.8280\\
        $r_2$ & 0.7113 &1.2099\\
        $r_3$ & 2.1237 & 2.5213\\
        \bottomrule
    \end{tabular}
\end{table}

We take two classic stationary kernels, Mat{\'e}rn 3/2 and RBF kernel, as examples. Their kernel functions are:
\begin{equation*}
    \begin{aligned}
        k_\mathit{Mat\text{{\'e}}rn\ 3/2}(r) &= \sigma_f^2(1+\sqrt{3}r)\exp(-\sqrt{3}r)\\
        k_\mathit{RBF}(r) &= \sigma_f^2\exp(-\frac{1}{2}r^2)
    \end{aligned}
\end{equation*}
where $r=\frac{\|\bm{x}-\bm{x'}\|_2}{l}$ and $l$ is the lengthscale.

The values of parameters involved in their piecewise-linear approximations are presented in Table \ref{tab:pwl_params}.

\subsection{Mixed-integer Optimization Formulation}\label{subsec:MIQP_formulation}
Given this piecewise linear kernel approximation, we now formulate the minimization of the approximated LCB \eqref{eq:approx_LCB} as an MIQP:

\begin{subequations}\label{eq:final_model}
    \begin{align}
        \min\limits_{x\in\mathcal X}~&
        \tilde{\mu}-\beta_t^{1/2}\tilde{\sigma}\label{eq:line_a}\\
        s.t.~& \tilde{\mu} = \tilde{K}_{xX}\tilde{K}_{XX}^{-1}\bm{y}\label{eq:line_b}\\
        &\tilde{\sigma}^2\le\tilde{\sigma}_f^2-\tilde{K}_{xX} \tilde{K}_{XX}^{-1}\tilde{K}_{Xx}\label{eq:line_c}\\
        &r_i^2=\frac{\|\bm{x}-\bm{x_i}\|_2^2}{l^2},~\forall 1\le i\le N\label{eq:line_d} \\
        &\tilde K_{xX_i}=\tilde k(r_i),~\forall 1\le i\le N\label{eq:line_e}
    \end{align}
\end{subequations}

Note $\beta_t^{1/2},\tilde{K}_{XX}^{-1},\bm{y},l$ are independent of $\bm x$ and their values are precomputed. Constraints \eqref{eq:line_b} and \eqref{eq:line_c} follow from \eqref{eq:GP_approximated_mean_variance}, where the inequality comes from quadratic constraint relaxation. The scaled Euclidean distance is given by constraint \eqref{eq:line_d}. 

Constraint \eqref{eq:line_e} involves the piecewise linearization of $k(\cdot)$, whose encoding is well-studied in MIP literature \citep{Beale1969sos1, Forrest1974largeMIP} and is provided in modern MIP solvers such as Gurobi \citep{gurobi2024}. Here we present the classic encoding using our notations in Eq.~\eqref{eq:pwl_encoding}. First, as shown in Eqs.~\eqref{eq:pwlline_a}--\eqref{eq:pwlline_c}, point $(r_i,\tilde k(r_i))$ is expressed as a convex combination of points $\{(R_j,\tilde k(R_j))\}_{0\le j\le M}$ with $\{w_i^j\}_{0\le j\le M}$ as the nonnegative and sum-to-one coefficients. Then, as given in Eqs.~\eqref{eq:pwlline_d}--\eqref{eq:pwlline_g}, an indicator variable $\lambda_i^j$ is introduced to model that point $(r_i,\tilde k(r_i))$ lies in $j$-th linear segment between $(R_{j-1},\tilde k(R_{j-1}))$ and $(R_j,\tilde k(R_j))$, i.e., only $w_i^{j-1}$ and $w_i^j$ can be nonzero if $\lambda_i^j=1$.
\begin{subequations}\label{eq:pwl_encoding}
    \begin{align}
        &r_i=\sum\limits_{j=0}^Mw_i^jR_j\label{eq:pwlline_a}\\
        &\tilde K_{xX_i}=\sum\limits_{j=0}^Mw_i^j\cdot \tilde{k}(R_j)\label{eq:pwlline_b}\\
        &\sum_{j=0}^{M}w_i^j=1,~w_i^j\ge 0,~\forall 0\le j\le M\label{eq:pwlline_c}\\
        &\sum\limits_{j=1}^M\lambda_i^j=1,~\lambda_i^j\in\{0,1\},~\forall 1\le j\le M\label{eq:pwlline_d}\\
        &w_i^0\le \lambda_i^1\label{eq:pwlline_e}\\
        &w_i^j\le \lambda_i^j+\lambda_i^{j+1},~\forall 1\le j<M\label{eq:pwlline_f}\\
        &w_i^M\le \lambda_i^M\label{eq:pwlline_g}
    \end{align}
\end{subequations}




\textbf{Initialization Heuristic.}
Note that finding a feasible initial point (upper bound) for \eqref{eq:final_model} is simple: we need only select a point $\boldsymbol{x}$ and evaluate \eqref{eq:GP_approximated_mean_variance}--\eqref{eq:approx_LCB}. 
We propose a heuristic to find initial points based on minimizing the posterior mean. Specifically, before solving \eqref{eq:final_model}, PK-MIQP first minimizes a sub-problem with $\tilde{\mu}$ as the objective and \eqref{eq:line_b}, \eqref{eq:line_d}--\eqref{eq:line_e} as constraints. Without the quadratic constraint \eqref{eq:line_c}, the sub-problem can be solved relatively quickly, producing an initial solution pool $P_\mathrm{sub}$ containing solutions with lowest mean. The full-problem \eqref{eq:final_model} is then solved using the best solution among $P_\mathrm{sub}$ and several randomly sampled points $P_\mathrm{rand}$ as a good incumbent solution. 
While the approximation error is theoretically analyzed in the following section, in practice we employ two steps to polish the solution. First, PK-MIQP selects the best solution found while solving \eqref{eq:final_model} with lowest $\alpha(\cdot)$, i.e., true LCB value. Then we apply a few steps of gradient descent to ensure we are exactly at a local optimum. The final solution $\bm{x_t}$ obtained is a minimum with global optimality guarantees (see Section~\ref{subsec:theory}). PK-MIQP can seamlessly handle \textit{constrained} optimization problems by adding known constraints to the formulation \eqref{eq:final_model}. PK-MIQP also adapts to any acquisition function that can be linearly (or quadratically) represented, e.g., GLCB \citep{rodemann2024glcb}, by replacing the objective function in \eqref{eq:line_a} accordingly. One iteration of PK-MIQP is outlined in Algorithm \ref{final_alg}.

\begin{algorithm}[tb]
    \caption{PK-MIQP at $t$-th iteration}
    \label{final_alg}
    \begin{algorithmic}
        \STATE {\bfseries Input:} sample points $\mathcal D_{t-1} =\{(\bm{x_i}, y_i)\}_{i=1}^{t-1}$, $\beta_t$.

        \STATE {\bfseries Kernel approximation:}
        
        \STATE\quad kernel parameters $\sigma_f^2,~l\gets$ GP fit to $\mathcal D_{t-1}$.
        
        \STATE\quad $\tilde k(\cdot)\gets$ piecewise linearization. 

        \STATE {\bfseries Warm start (optional):}
        \STATE\quad $P_\mathrm{sub}\gets$ solve sub-problem of \eqref{eq:final_model}.

        \STATE\quad $P_\mathrm{rand}\gets$ random feasible solutions of \eqref{eq:final_model}.

        \STATE {\bfseries Acquisition optimization:}

        \STATE\quad $P_\mathrm{full}\gets$ solve \eqref{eq:final_model} (incumbent from $P_\mathrm{sub} \cup P_\mathrm{rand}$).
        
        \STATE {\bfseries Solution polishing:}

        \STATE\quad $\bm{x_t^0}=\mathop{\arg\min}\limits_{{\bm x}\in P_\mathrm{full}}\alpha(\bm{x})\gets$ solution with lowest LCB.

        \STATE\quad $\bm{x_t}\gets$ correction starting at $\bm{x_t^0}$.
        
        \STATE {\bfseries Output:} next sample $\bm{x_t}$.
    \end{algorithmic}
\end{algorithm}


As PK-MIQP is a generic optimization framework, computational cost can be reduced using existing strategies, e.g., additive GP (add-GP) training \citep{duvenaud2011additiveGP, Kandasamy2015AGP}. The basic idea of add-GP is to decompose black-box function $f$ among $N_g$ disjoint sets of dimensions:
\begin{equation*}
        f(\bm{x}) = \sum_{i = 1}^{N_g} f^{(i)}(\bm{x}^{(i)})
\end{equation*}
where each set of dimensions has a kernel that only acts on the included dimensions. 
Since the sets of dimensions are independent, they can be optimized separately, e.g., applying PK-MIQP $N_g$ times in parallel. 

\subsection{Theoretical Analysis}\label{subsec:theory}

Given the approximation and methodology above, we now aim to establish a bound on regret $r_t:=f(\bm{x_t}) - f(\bm{x^*})$ for using a GP with the proposed approximated kernel at the $t^{th}$ iteration of a full BO loop. Here $f$ is the black-box objective, $\bm{x^*}$ denotes the (oracle) optimum point that minimizes $f$, and $\bm{x_t}$ is the chosen point to evaluate at iteration $t$. 
We begin with two remarks on the proposed approximated kernel:

\begin{remark}
    The difference between the true and approximated kernel functions is bounded by error $\epsilon_M$, which is a function of the number of linear pieces $M$.
\end{remark}

\begin{remark}
    Error $\epsilon_M$ asymptotically converges to 0 as the number of linear pieces increases:
\begin{equation*}
    \begin{aligned}
        \lim_{M\to\infty} \epsilon_M = 0
    \end{aligned}
\end{equation*}
\end{remark}

With the above remarks, Theorem \ref{thm:main} shows that the approximated kernel mean (and variance) converges to the true mean (and variance) as $M\to\infty$.

\begin{theorem}\label{thm:main}
    Given $N$ observed data points $\bm{X}$ with outputs $\bm{y}$, for any ${\bm x}\in\mathcal D$, we have:
    \begin{equation*}
        \begin{aligned}
            |\mu(\bm{x})-\tilde\mu(\bm{x})|\le C_{\mu}N^2\epsilon_M,~|\sigma(\bm{x})-\tilde\sigma(\bm{x})|\le C_{\sigma}N\epsilon_M^{1/2}\\
        \end{aligned}
    \end{equation*}
\end{theorem}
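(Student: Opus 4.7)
The plan is to propagate the pointwise kernel approximation error $\epsilon_M$ from Remark 1 through the linear-algebra expressions for the posterior mean and variance: first to vector/matrix perturbation bounds, then to a matrix-inverse perturbation bound, and finally via telescoping identities to the claimed estimates. The key observation is that $\mu$ and $\sigma^2$ are linear and quadratic, respectively, in the kernel objects $K_{xx}$, $K_{xX}$, and $K_{XX}^{-1}$, so the analysis reduces to controlling how each of these is perturbed.

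First I would translate $|k(r) - \tilde k(r)| \leq \epsilon_M$ into the entrywise bounds $|K_{xx} - \tilde K_{xx}| \leq \epsilon_M$, $\|K_{xX} - \tilde K_{xX}\|_2 \leq \sqrt{N}\,\epsilon_M$, and $\|K_{XX} - \tilde K_{XX}\|_2 \leq \|K_{XX} - \tilde K_{XX}\|_F \leq N\epsilon_M$. Then the identity $A^{-1} - B^{-1} = A^{-1}(B - A)B^{-1}$, together with the paper's standing assumption $\|K_{XX}\|_2 \geq \sigma_\epsilon^2$, yields $\|K_{XX}^{-1} - \tilde K_{XX}^{-1}\|_2 \leq C_0 N\epsilon_M$ for $\epsilon_M$ small enough that $\tilde K_{XX}$ remains invertible with uniformly bounded inverse. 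For the mean, I would exploit the telescoping identity $\mu - \tilde\mu = (K_{xX} - \tilde K_{xX}) K_{XX}^{-1}\bm{y} + \tilde K_{xX}(K_{XX}^{-1} - \tilde K_{XX}^{-1})\bm{y}$, bounding each term by Cauchy--Schwarz with $\|\bm{y}\|_2 \leq \sqrt{N}\,\|\bm{y}\|_\infty$ and $\|\tilde K_{xX}\|_2 \leq \sqrt{N}\,\sigma_f^2$. The first term contributes $O(N\epsilon_M)$ and the second $O(N^2\epsilon_M)$, giving $|\mu - \tilde\mu| \leq C_\mu N^2 \epsilon_M$.

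For the standard deviation I would first bound $|\sigma^2 - \tilde\sigma^2|$ by splitting the bilinear form $K_{xX}K_{XX}^{-1}K_{Xx} - \tilde K_{xX}\tilde K_{XX}^{-1}\tilde K_{Xx}$ into an analogous three-term telescoping sum; bounding the factors the same way and adding the harmless diagonal contribution $|K_{xx} - \tilde K_{xx}| \leq \epsilon_M$ gives $|\sigma^2 - \tilde\sigma^2| \leq C N^2 \epsilon_M$. The final step applies the elementary inequality $(\sigma - \tilde\sigma)^2 \leq |\sigma - \tilde\sigma|(\sigma + \tilde\sigma) = |\sigma^2 - \tilde\sigma^2|$, valid because $\sigma, \tilde\sigma \geq 0$, which yields $|\sigma - \tilde\sigma| \leq C_\sigma N \epsilon_M^{1/2}$.

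The main obstacle is the matrix-inverse perturbation step: one must verify that $\tilde K_{XX}$ is invertible and that $\|\tilde K_{XX}^{-1}\|_2$ is controlled uniformly in $M$, which is precisely where the lower eigenvalue assumption on $K_{XX}$ is used; handling this cleanly requires either a Neumann-series argument or an explicit smallness condition on $\epsilon_M$. A secondary subtlety is the square-root passage from $\sigma^2$ to $\sigma$, which genuinely loses half a power of $\epsilon_M$ and is what produces the $N\epsilon_M^{1/2}$ scaling in place of $N^2\epsilon_M$; this is an intrinsic feature of the argument rather than a slack step that could be tightened.
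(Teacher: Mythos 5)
Your proposal is correct and follows essentially the same route as the paper: the same entrywise-to-operator-norm perturbation bounds $\|\psi\|_2\le\sqrt{N}\epsilon_M$, $\|\Psi\|_2\le N\epsilon_M$, the same telescoping decomposition via $A^{-1}-B^{-1}=A^{-1}(B-A)B^{-1}$, and the same norm estimates giving the $N^2\epsilon_M$ and $N\epsilon_M^{1/2}$ scalings (the paper even has the diagonal term vanish exactly since $K_{xx}=\tilde K_{xx}=\sigma_f^2$, so your extra $\epsilon_M$ is indeed harmless). Your final passage $(\sigma-\tilde\sigma)^2\le|\sigma^2-\tilde\sigma^2|$ is a slightly cleaner one-line replacement for the paper's two-case argument and yields the same constant.
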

\begin{proof}[Proof (Sketch)]
Denote $\psi:=K_{xX}-\tilde K_{xX}$ and $\Psi:=K_{XX}-\tilde K_{XX}$. By the definition of $\mu(\bm{x})$ and $\tilde\mu(\bm{x})$, 
    \begin{equation*}
        \begin{aligned}
            \mu(\bm{x})-\tilde\mu(\bm{x}) 
            =-\tilde{K}_{xX}K_{XX}^{-1}\Psi\tilde{K}_{XX}^{-1} \bm{y}+\psi K_{XX}^{-1}\bm{y}
        \end{aligned}
    \end{equation*}
Since $\|\psi\|_2\le \sqrt{N}\epsilon_M,~\|\Psi\|_2\le N\epsilon_M,~\|\tilde K_{xX}\|_2\le \sqrt{N}\sigma_f^2$, and $\|\bm{y}\|_2\le \sqrt{N}$ (In our implementation, we scaled the objective $f(x)\in [0,1]$), we have:
\begin{equation*}
    \begin{aligned}
        |\mu(\bm{x})-\tilde\mu(\bm{x})|\le C_{\mu}N^2\epsilon_M
    \end{aligned}
\end{equation*}
where the constant $C_{\mu}=\sigma^{-4}_\epsilon \sigma^2_{\max}$ and $\sigma_{\max}^2$ is the upper bound of  kernel variance $\sigma_f^2$.

Similarly, the difference of variances is:
\begin{equation*}
    \begin{aligned}
        \sigma^2(\bm{x})-\tilde\sigma^2(\bm{x})
    =&\tilde{K}_{xX}K_{XX}^{-1}\Psi\tilde{K}_{XX}^{-1} \tilde{K}_{Xx}\\
    &-2\tilde{K}_{xX}K_{XX}^{-1}\psi^T-\psi K_{XX}^{-1}\psi^T
    \end{aligned}
\end{equation*}
Then we have:
\begin{equation*}
    \begin{aligned}
        |\sigma^2(\bm{x})-\tilde\sigma^2(\bm{x})| \le C^2_\sigma N^2\epsilon_M
    \end{aligned}
\end{equation*}
where the constant $C_\sigma=\sigma^{-2}_\epsilon\sigma^2_{\max}$. Since variances are non-negative, we can derive:
\begin{equation*}
    \begin{aligned}
        |\sigma(\bm {x})-\tilde\sigma(\bm {x})|\le C_\sigma N\epsilon^{1/2}_M
    \end{aligned}
\end{equation*}
which completes the proof. A more comprehensive proof is provided in Appendix.
\end{proof}

Theorem \ref{thm:main} holds for any continuous kernel with a proper piecewise linear approximation. With additional smoothness assumptions on the kernel, we can derive similar regret bounds as \citet{Srinivas2012GPregret}. Following the same settings as Lemma 5.8 in \citet{Srinivas2012GPregret}, Theorem \ref{thm:regret} bounds the regret $r_t=f(\bm{x_t})-f({\bm x^*})$, where $\bm {x_t}$ is the optimal solution of  MIQP \eqref{eq:final_model} at $t$-th iteration. Since we may need more pieces in our approximation, denote $M_t$ as the number of linear pieces at $t$-th iteration.

\begin{theorem}\label{thm:regret}
Let $D\subset [0,r]^d$ be compact and complex, $d\in \mathbb N, r>0$. Suppose kernel $K(\bm{x},\bm{x'})$ satisfies the following high probability bound on the derivatives of GP sample paths $f$: for some constants $a,b>0$:
\begin{equation*}
    \begin{aligned}
        Pr\left\{\sup_{\bm{x}\in D}|\partial f/\partial x_j|>L\right\}\le ae^{-(L/b)^2},~j=1,2,\dots,d
    \end{aligned}
\end{equation*}
Select $\delta\in (0,1)$, and define $\beta_t=2\log(2t^2\pi^2/(3\delta))+2d\log(t^2dbr\sqrt{\log(4da/\delta)})$. Then the following regret bounds
    \begin{equation*}
        \begin{aligned}
            r_t\le 2\beta_t^{1/2}\sigma_{t-1}(\bm {x_t})+1/t^2+2C_{\mu}t^2\epsilon_{M_t}+4C_\sigma \beta_t^{1/2}t\epsilon^{1/2}_{M_t}
        \end{aligned}
    \end{equation*}
    hold with probability $\ge 1-\delta$.
\end{theorem}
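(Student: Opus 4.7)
The plan is to adapt the GP-LCB (equivalently GP-UCB, for minimization) regret analysis of \citet{Srinivas2012GPregret} while propagating the kernel-approximation error through Theorem \ref{thm:main}. First I would assemble the standard concentration toolkit. At each iteration $t$, fix a discretization $D_t \subset D$ with $\tau_t = \mathcal{O}(t^2 dbr \sqrt{\log(4da/\delta)})$ points per coordinate. A union bound over $D_t$ combined with Gaussian tail inequalities (the analogues of Lemmas~5.5 and~5.1 in \citet{Srinivas2012GPregret}) gives, with probability at least $1-\delta$, both
\begin{equation*}
|f(\bm{x}) - \mu_{t-1}(\bm{x})| \le \beta_t^{1/2}\sigma_{t-1}(\bm{x}),\quad \forall \bm{x}\in D_t\cup\{\bm{x_t}\},\ \forall t\ge 1,
\end{equation*}
and, via the derivative tail hypothesis together with Lemma~5.7 of \citet{Srinivas2012GPregret}, the discretization bound $|f(\bm{x})-f([\bm{x}]_t)|\le 1/t^2$ for all $\bm{x}\in D$, where $[\bm{x}]_t$ is the nearest point of $D_t$ to $\bm{x}$. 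The choice of $\beta_t$ in the statement is exactly what is needed to close the union bound.

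Next I would apply these inequalities at the unknown minimizer. Combining the discretization bound at $\bm{x^*}$ with the concentration bound at $[\bm{x^*}]_t\in D_t$ yields
\begin{equation*}
f(\bm{x^*}) \ge \mu_{t-1}([\bm{x^*}]_t) - \beta_t^{1/2}\sigma_{t-1}([\bm{x^*}]_t) - 1/t^2.
\end{equation*}
Then I invoke the defining optimality of $\bm{x_t}$ on the \emph{approximated} LCB, noting that $[\bm{x^*}]_t\in D\subset\mathcal X$:
\begin{equation*}
\tilde\mu_{t-1}(\bm{x_t})-\beta_t^{1/2}\tilde\sigma_{t-1}(\bm{x_t}) \le \tilde\mu_{t-1}([\bm{x^*}]_t)-\beta_t^{1/2}\tilde\sigma_{t-1}([\bm{x^*}]_t).
\end{equation*}
Theorem \ref{thm:main} is then applied at $[\bm{x^*}]_t$ to replace the right-hand side by true mean and variance, and chained with the lower bound on $f(\bm{x^*})$, yielding
\begin{equation*}
\tilde\mu_{t-1}(\bm{x_t})-\beta_t^{1/2}\tilde\sigma_{t-1}(\bm{x_t}) \le f(\bm{x^*}) + 1/t^2 + C_\mu t^2 \epsilon_{M_t} + \beta_t^{1/2}C_\sigma t\,\epsilon_{M_t}^{1/2}.
\end{equation*}

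Finally I would close the loop by upper bounding $f(\bm{x_t})$. Using the concentration inequality at $\bm{x_t}$ together with Theorem~\ref{thm:main} at $\bm{x_t}$ gives $f(\bm{x_t})\le \tilde\mu_{t-1}(\bm{x_t})+\beta_t^{1/2}\tilde\sigma_{t-1}(\bm{x_t})+C_\mu t^2\epsilon_{M_t}+\beta_t^{1/2}C_\sigma t\,\epsilon_{M_t}^{1/2}$. Subtracting $f(\bm{x^*})$, using the previous display, adding $2\beta_t^{1/2}\tilde\sigma_{t-1}(\bm{x_t})$ on both sides, and finally converting $\tilde\sigma_{t-1}(\bm{x_t})$ back to $\sigma_{t-1}(\bm{x_t})$ via one more use of Theorem~\ref{thm:main} delivers the claimed bound, with the factor of $2$ on the $C_\mu$-term and the factor of $4$ on the $C_\sigma$-term accounting for the four separate invocations of the approximation inequalities (two for the mean at $\bm{x_t}$ and $[\bm{x^*}]_t$, and four for the variance, since $\tilde\sigma$ at $\bm{x_t}$ is traded twice). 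The main obstacle is precisely this book-keeping in the last step: every translation between true and approximated mean or variance at either $\bm{x_t}$ or $[\bm{x^*}]_t$ injects a slack term, and one must carefully count invocations of Theorem~\ref{thm:main} to obtain the stated constants rather than loose ones. A secondary concern, implicit in the statement, is that the extra terms grow polynomially in $t$, so achieving sublinear cumulative regret requires an implicit refinement schedule in which $M_t$ grows fast enough to make $\epsilon_{M_t}$ decay faster than $t^{-2}$.
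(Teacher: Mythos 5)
Your proposal is correct and follows essentially the same route as the paper's proof: the Srinivas et al.\ concentration and discretization lemmas (5.5 and 5.7), the optimality of $\bm{x_t}$ for the \emph{approximated} LCB at $\bm{[x^*]_t}$, and repeated applications of Theorem \ref{thm:main} with exactly the paper's accounting (two mean translations giving $2C_\mu t^2\epsilon_{M_t}$, and variance translations totalling $4C_\sigma\beta_t^{1/2}t\epsilon_{M_t}^{1/2}$ because the final $2\beta_t^{1/2}\tilde\sigma_{t-1}(\bm{x_t})\to 2\beta_t^{1/2}\sigma_{t-1}(\bm{x_t})$ conversion costs two units). The only difference is presentational, with your separate upper/lower bounds on $f(\bm{x_t})$ and $f(\bm{x^*})$ versus the paper's single chain of inequalities.
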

\begin{proof}[Proof (Sketch)]
    Lemma 5.5 and Lemma 5.7 in \citet{Srinivas2012GPregret} give us:
    \begin{equation*}
        \begin{aligned}
            |f(\bm {x_t}) - \mu_{t-1}(\bm{x_t})|&\le \beta_t^{1/2}\sigma_{t-1}(\bm{x_t})\\
            |f(\bm{x^*})-\mu_{t-1}(\bm{[x^*]_t})|&\le\beta_{t-1}(\bm{[x^*]_t})+1/t^2
        \end{aligned}
    \end{equation*}
    where $\bm{[x^*]_t}$ is the closest point in $D_t$ to $\bm{x^*}$, and $D_t\subset D$ is a discretization at $t$-th iteration.

    Recall the definition of $\bm{x_t}$, we have:
    \begin{multline*}
            \tilde \mu_{t-1}(\bm {x_t}) - \beta_t^{1/2}\tilde\sigma_{t-1}(\bm{x_t})\le  \\ \tilde \mu_{t-1}(\bm{[x^*]_t}) - \beta_t^{1/2}\tilde\sigma_{t-1}(\bm{[x^*]_t})
    \end{multline*}
    Using Theorem \ref{thm:main} to replace $\mu,~\sigma$ with $\tilde\mu,~\tilde\sigma$, we have:
    \begin{equation*}
        \begin{aligned}
            r_t=&~f(\bm{x_t})-f(\bm{x^*})\\
            \le&~\tilde\mu_{t-1}(\bm{x_t})-\tilde\mu_{t-1}(\bm{[x^*]_t})+\beta_t^{1/2}\tilde\sigma_{t-1}(\bm{[x^*]_t})\\
            &+\beta_t^{1/2}\tilde\sigma_{t-1}(\bm{x_t})+1/t^2+2C_{\mu}t^2\epsilon_M+2C_\sigma \beta_t^{1/2}t\epsilon^{1/2}_M\\
            \le&~2\beta_t^{1/2}\tilde\sigma_{t-1}(\bm{x_t})+1/t^2+2C_{\mu}t^2\epsilon_{M_t}+2C_\sigma \beta_t^{1/2}t\epsilon^{1/2}_{M_t}
        \end{aligned}
    \end{equation*}
    Applying Theorem \ref{thm:main} again completes this proof. 
\end{proof}

See Appendix \ref{app:full_proof} for full proofs of Theorems \ref{thm:main} and \ref{thm:regret}. Lemma \ref{lemma:bounds} is a direct conclusion from Theorem \ref{thm:regret}, which implies that the same regret bounds as \citet{Srinivas2012GPregret} hold when using PK-MIQP to optimize approximated LCB.

\begin{lemma}\label{lemma:bounds}
    For $\epsilon_{M_t}=1/\mathcal O(t^{4+\epsilon}),~\forall t\ge 1$ with some $\epsilon>0$, running PK-MIQP for a sample $f$ from a GP with zero mean and covariance $K(\bm{x},\bm{x'})$, we can bound regret by $\mathcal O^*(\sqrt{dT\gamma_T})$ with high probability. Precisely, pick $\delta\in(0,1)$, with $C_1=8/\log(1+\sigma_f^{-2}),~C_2=\sum\limits_{t=1}^T(1/t^2+2C_{\mu}t^2\epsilon_{M_t}+4C_\sigma \beta_t^{1/2}t\epsilon^{1/2}_{M_t})$, we have:
    \begin{equation*}
        \begin{aligned}
            Pr\left\{R_T\le \sqrt{C_1T\beta_T\gamma_T}+C_2\right\}\ge 1-\delta
        \end{aligned}
    \end{equation*}
    where $R_T=\sum_{t=1}^Tr_t$ is the cumulative regret, $\gamma_T$ is the maximal information gain after $T$ rounds.
\end{lemma}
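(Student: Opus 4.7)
The plan is to reduce Lemma \ref{lemma:bounds} to a direct summation over Theorem \ref{thm:regret}, isolating the standard information-gain term of \citet{Srinivas2012GPregret} from the extra approximation residuals introduced by PK-MIQP. First I would sum the per-iteration regret bound of Theorem \ref{thm:regret} from $t=1$ to $T$, which gives
\begin{equation*}
R_T \;\le\; \sum_{t=1}^{T} 2\beta_t^{1/2}\tilde\sigma_{t-1}(\bm{x_t}) \;+\; \sum_{t=1}^{T}\Bigl(\tfrac{1}{t^2} + 2C_\mu t^2 \epsilon_{M_t} + 4C_\sigma \beta_t^{1/2} t \epsilon_{M_t}^{1/2}\Bigr),
\end{equation*}
so that the second group is exactly the constant $C_2$ in the statement. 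Because $\beta_t$ grows only polylogarithmically and the monotonicity of $\beta_t$ lets me replace $\beta_t$ by $\beta_T$ throughout, the burden reduces to (i) controlling the $\tilde\sigma$-sum and (ii) showing $C_2 < \infty$ for the prescribed schedule of $\epsilon_{M_t}$.

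For step (i), I would invoke the argument of \citet{Srinivas2012GPregret} (their Lemmas 5.3--5.4): Cauchy--Schwarz yields
\begin{equation*}
\Bigl(\sum_{t=1}^T 2\beta_t^{1/2}\sigma_{t-1}(\bm{x_t})\Bigr)^2 \le 4T\beta_T \sum_{t=1}^{T}\sigma_{t-1}^2(\bm{x_t}) \le C_1 T\beta_T\gamma_T,
\end{equation*}
where the final inequality uses the information-gain identity that links $\sum_t \sigma_{t-1}^2(\bm{x_t})$ to $\gamma_T$ through the factor $1/\log(1+\sigma_f^{-2})$. A small but necessary bridging step is to replace $\tilde\sigma_{t-1}$ by $\sigma_{t-1}$: by Theorem \ref{thm:main} this introduces an additive error $C_\sigma t\epsilon_{M_t}^{1/2}$ per term, which can be absorbed into $C_2$ (at worst doubling the coefficient of the last sum), so no new term arises in the statement.

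For step (ii), I would check that the schedule $\epsilon_{M_t}=1/\mathcal O(t^{4+\epsilon})$ makes each piece of $C_2$ summable: $\sum 1/t^2$ is the Basel series; $t^2\epsilon_{M_t} = \mathcal O(t^{-2-\epsilon})$ is summable; and $\beta_t^{1/2} t\epsilon_{M_t}^{1/2} = \mathcal O(\sqrt{\log t}\cdot t^{-1-\epsilon/2})$ is also summable by the integral test. Thus $C_2$ is a finite deterministic constant depending only on $\epsilon$ and on $C_\mu, C_\sigma, \{\beta_t\}$.

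Finally, the probability $1-\delta$ would be inherited verbatim from Theorem \ref{thm:regret}, since that is the only stochastic ingredient; combining the deterministic bound on $C_2$ with the probabilistic bound on $\sqrt{C_1 T\beta_T\gamma_T}$ gives the stated tail inequality. I expect the main technical obstacle to be the bridging step in (i): one must be careful that the information-gain machinery is applied to the \emph{true} posterior variance $\sigma_{t-1}$ (which is what $\gamma_T$ is defined from) while the chosen point $\bm{x_t}$ is selected by PK-MIQP using $\tilde\sigma_{t-1}$, so a clean swap via Theorem \ref{thm:main} is essential for the argument to go through without inflating the dominant $\mathcal O^*(\sqrt{dT\gamma_T})$ rate.
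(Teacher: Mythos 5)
Your proposal is correct and follows essentially the same route as the paper, whose proof is simply to substitute the enlarged per-round residual $1/t^2+2C_{\mu}t^2\epsilon_{M_t}+4C_\sigma \beta_t^{1/2}t\epsilon^{1/2}_{M_t}$ into the summation argument of Theorem 2 of \citet{Srinivas2012GPregret}; your expansion of the Cauchy--Schwarz/information-gain step and the summability check for $C_2$ fills in exactly what that reference supplies. One cosmetic note: your first display writes the variance sum with $\tilde\sigma_{t-1}$ while already carrying the coefficient $4C_\sigma$, which double-counts the $\tilde\sigma\to\sigma$ swap you later describe --- Theorem \ref{thm:regret} as stated already performs that swap (that is where $2C_\sigma$ becomes $4C_\sigma$), so no further bridging is needed.
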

\begin{proof}
    Replacing the term $1/t^2$ by $1/t^2+2C_{\mu}t^2\epsilon_{M_t}+4C_\sigma \beta_t^{1/2}t\epsilon^{1/2}_{M_t}$ in the proof of Theorem 2 in \citet{Srinivas2012GPregret} finishes this proof.
\end{proof}

\begin{remark}
    The smoothness assumption holds for any stationary kernel $K(\bm{x},\bm{x'})=k(\bm{x}-\bm{x'})$ that is four times differentiable, including squared exponential and Mat{\'e}rn kernels with $\nu>2$. Whether the same conclusion for Mat{\'e}rn 3/2 holds is unclear, so we choose Mat{\'e}rn 3/2 in the main paper to empirically test its performance and find promising results. 
\end{remark}

\section{RESULTS}\label{sec:results}

\begin{table*}[h!]
    \centering
    \caption{Comparison of solvers on optimizing random acquisition functions using Mat{\'e}rn 3/2 kernel. The mean of the optimal LCB values found over the 20 replications is reported with 0.5 standard deviation in parentheses. PK-MIQP consistently outperforms other gradient- and sampling-based methods.}
    \label{tab:one_iter}
    \begin{tabular}{cccccc}
    \toprule
    method & 1D & 2D & 3D & 4D & 5D \\
    \midrule
    L-BFGS-B & -0.68(0.54) & -0.94(0.51) & -1.79(0.74) & -1.21(0.77) & -1.22(0.50)\\
    Nelder-Mead & -0.65(0.51) & -0.96(0.51) & -1.32(0.42) & -1.98(1.07) & -1.07(0.41)\\
    COBYLA & -1.09(0.44) & -1.26(0.52) & -1.48(0.75) & -1.58(0.75) & -0.68(0.49)\\
    SLSQP & -0.59(0.53) & -0.99(0.70) & -1.58(0.75) & -1.49(0.86) & -1.43(0.51)\\
    trust-constr & -0.54(0.53) & -1.19(0.55) & -1.95(0.53) & -1.71(0.88) & -1.61(0.49)\\
    PK-MIQP & \textbf{-1.76(0.27)} & \textbf{-2.26(0.49)} & \textbf{-2.17(0.36)} & \textbf{-2.25(0.74)} & \textbf{-1.62(0.50)}\\
    \bottomrule
    \end{tabular}
\end{table*}

\begin{table*}[h!]
    \centering
    \caption{Comparison of solvers on optimizing random acquisition functions using Mat{\'e}rn 3/2 kernel. The mean of the computational time in seconds over the 20 replications is reported with 0.5 standard deviation in parentheses.}
    \label{tab:comp_time}
    \begin{tabular}{cccccc}
    \toprule
    method & 1D & 2D & 3D & 4D & 5D \\
    \midrule
    L-BFGS-B & 0.12(0.06) & 0.14(0.08) & 0.46(0.25) & 0.41(0.16) & 0.80(0.37)\\
    Nelder-Mead & 0.27(0.11) & 1.63(2.35) & 1.96(1.28) & 2.34(0.96) & 3.75(1.51)\\
    COBYLA & 0.21(0.06) & 0.48(0.29) & 1.57(1.32) & 2.21(1.52) & 1.68(0.75)\\
    SLSQP & 0.08(0.04) & 0.10(0.05) & 0.40(0.17) & 0.24(0.13) & 0.61(0.23)\\
    trust-constr & 0.26(0.11) & 0.78(0.21) & 0.88(0.29) & 2.67(1.76) & 1.78(0.84)\\
    add-GP PK-MIQP & 2.08(0.35) & 7.14(0.93) & 15.41(2.04) & 19.06(4.52) & 22.45(3.34)\\
    PK-MIQP & 2.08(0.35) & 11.37(3.95) & 866.48(601.49) & 1557.82(576.70) & 3199.24(361.09)\\
    \bottomrule
    \end{tabular}
\end{table*}

We compare the performance of PK-MIQP against state-of-the-art minimizers over several benchmarks and a hyperparameter tuning problem. 
The Mat\'ern 3/2 kernel and LCB are used for all methods; only the optimizer used for the acquisition function is changed. For gradient-based methods, we choose L-BFGS-B \citep{Zhu1997LBFGS}, SLSQP \citep{Kraft1988slsqp} and trust-constr \citep{Byrd1999trustconstr}. For sampling-based methods, we consider COBYLA \citep{Powell1994cobyla} and Nelder-Mead \citep{Nelder1965neldermead}. These methods are chosen since they are relatively general and can be directly deployed without altering the BO algorithm or acquisition function. We use the default implementations in \texttt{scipy} \citep{Scipy2020sciPy}.

All experiments were performed on a 3.2 GHz Intel Core i7-8700 CPU with 16 GB memory. For each case, we report the mean with $\pm 0.5$ standard deviation of simple regret over $20$ replications. For all benchmarks, we initially sample $\min\{10D,30\}$ points using Latin hypercube sampling (LHS). For the SVM tuning problem, the size of the initial set is chosen as $10$, since evaluations are time-consuming. At each iteration, the sample points are standardized to $[0, 1]$ before the GP training. We set size of solution pools $P_\mathrm{sub}$ and $P_\mathrm{rand}$ to 10 for PK-MIQP. We use GPflow \citep{Matthews2017GPflow} to implement GP models and Gurobi v11.0.0 \citep{gurobi2024} to solve the resulting MIQPs (including solution pools). Full implementation details are provided in Appendix \ref{app:exp_details}. The code is available at \href{https://github.com/YilinElinXie}{GitHub}.

\subsection{Single acquisition function optimization}\label{subsec:BO_one_step}

Before considering a full BO loop, we first study the performance of PK-MIQP on optimizing a given acquisition function. Specifically, we consider GP models given random samples from the prior in 1D--5D and employ different solvers to minimize the resulting LCB. The results for GP models with  Mat{\'e}rn 3/2 kernel are given in Table \ref{tab:one_iter}. PK-MIQP consistently outperforms the other gradient- and sampling-based methods considered, suggesting the benefits of global optimization. Notably, increasing the problem dimension does not significantly impact the solution quality returned by PK-MIQP. The same conclusions can be made when using the RBF kernel (results given in Appendix \ref{app:rbf}), which furthermore supports the generalizability of the proposed PK-MIQP framework.

\subsection{Real-time results}\label{subsec:time_results}
To investigate the computational requirements to achieve global optimality using PK-MIQP, we report the computational time of each method used in Section \ref{subsec:BO_one_step}. Add-GP training is applied to 1D--5D functions sampled from GP models, where each function is decomposed into $N_g=D$ groups. The GP model is then trained with the additive kernel, and optimization solvers are applied for each group independently.

As shown in Table \ref{tab:comp_time}, PK-MIQP has a higher time complexity compared to gradient- and sample-based methods, which is the expected cost of employing global optimization. Note that BO is particularly useful when evaluating the unknown objective function is expensive, meaning extended computational times may not be a deterrent in many BO settings. Introducing add-GP greatly reduces the time cost for PK-MIQP, and we hope that our work motivates future research in MIQP methods to accelerate solutions.

\subsection{Bayesian optimization using PK-MIQP}\label{subsec:BO_full_loop}

\begin{figure*}[ht]
     \centering
     \begin{subfigure}[b]{0.325\textwidth}
         \centering
         \includegraphics[width=\textwidth]{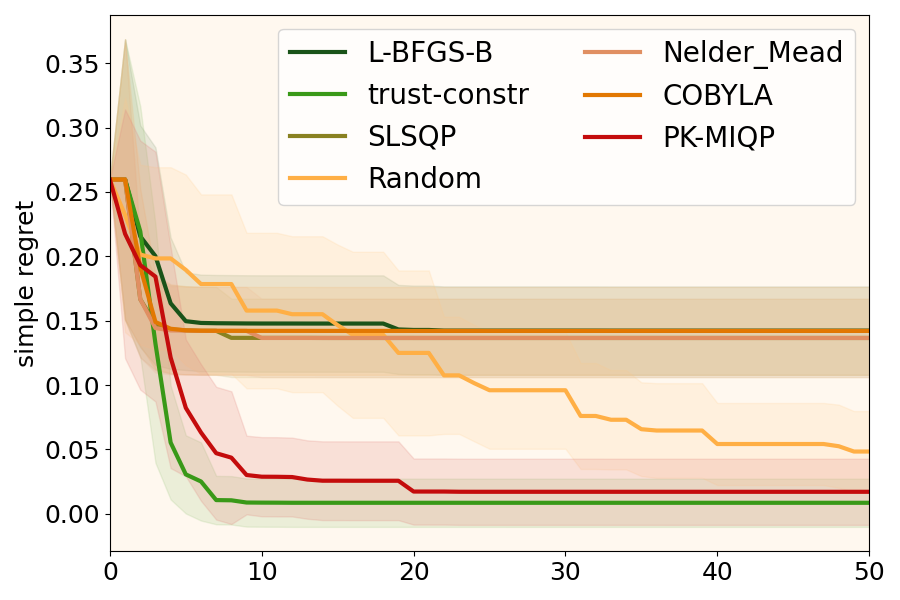}
         \caption{1D Multimodal}
         \label{fig:test_2}
     \end{subfigure}
     \begin{subfigure}[b]{0.325\textwidth}
         \centering
         \includegraphics[width=\textwidth]{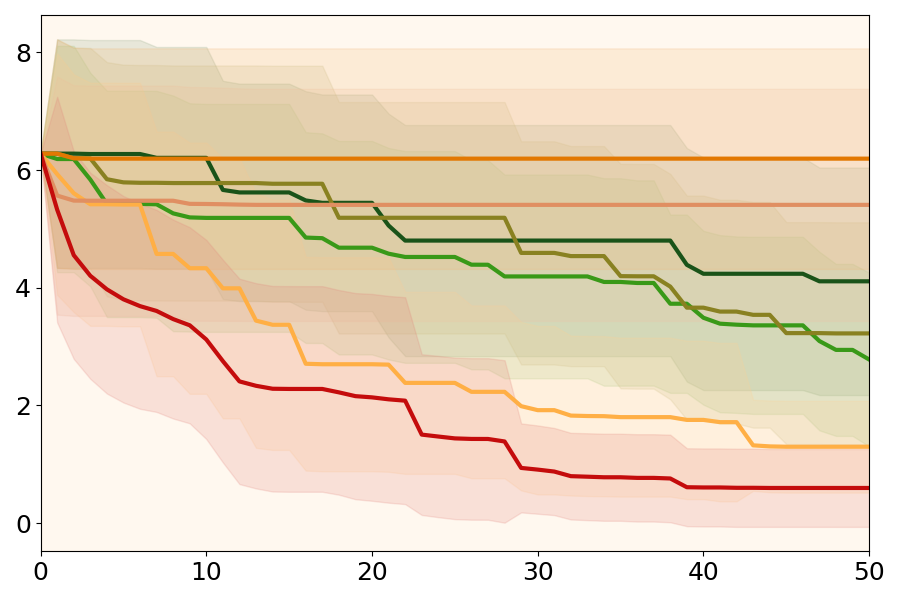}
         \caption{1D Bumpy}
         \label{fig:test_1}
     \end{subfigure}
     \begin{subfigure}[b]{0.325\textwidth}
         \centering
         \includegraphics[width=\textwidth]{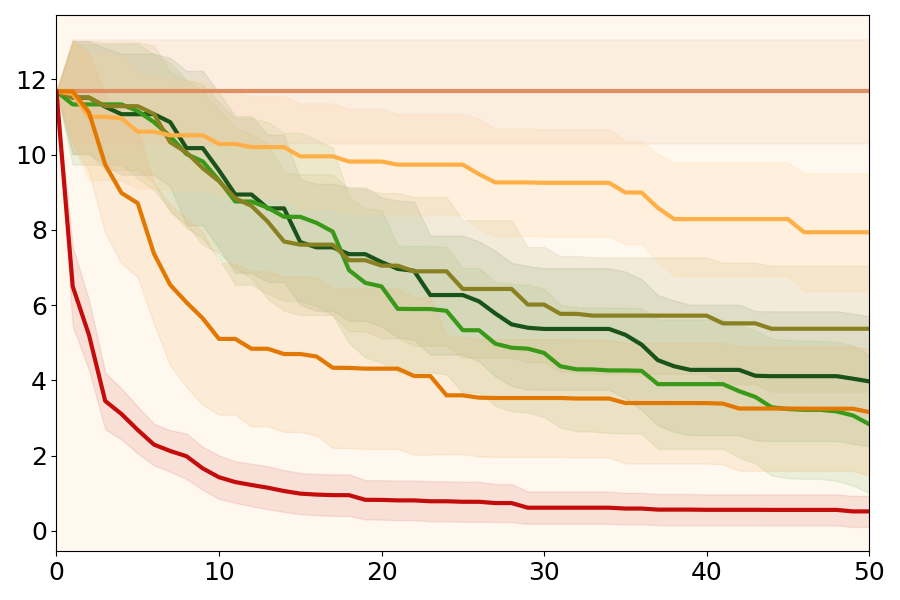}
         \caption{2D Ackley}
         \label{fig:ackley}
     \end{subfigure}
     \begin{subfigure}[b]{0.325\textwidth}
         \centering
         \includegraphics[width=\textwidth]{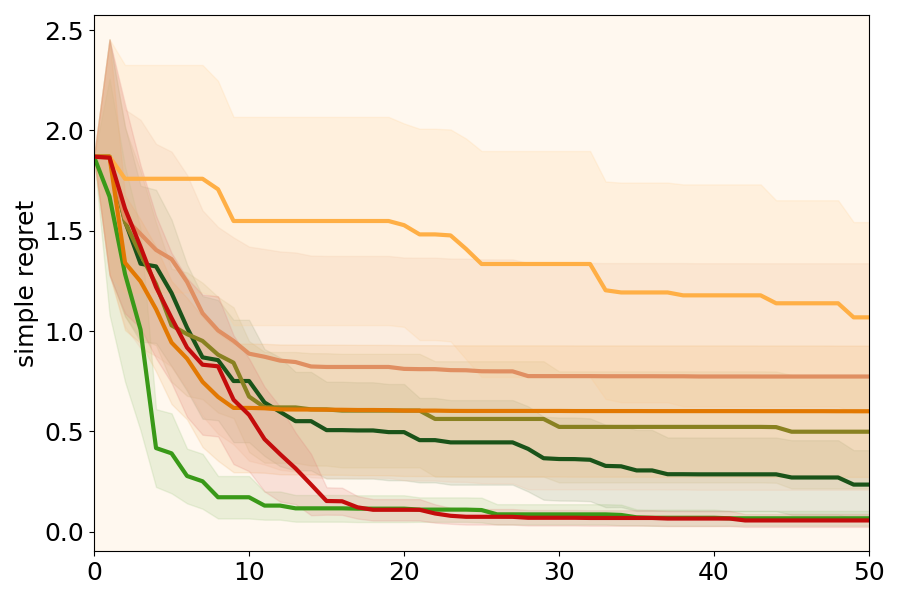}
         \caption{2D Branin}
         \label{fig:branin}
     \end{subfigure}
     \begin{subfigure}[b]{0.325\textwidth}
         \centering
         \includegraphics[width=\textwidth]{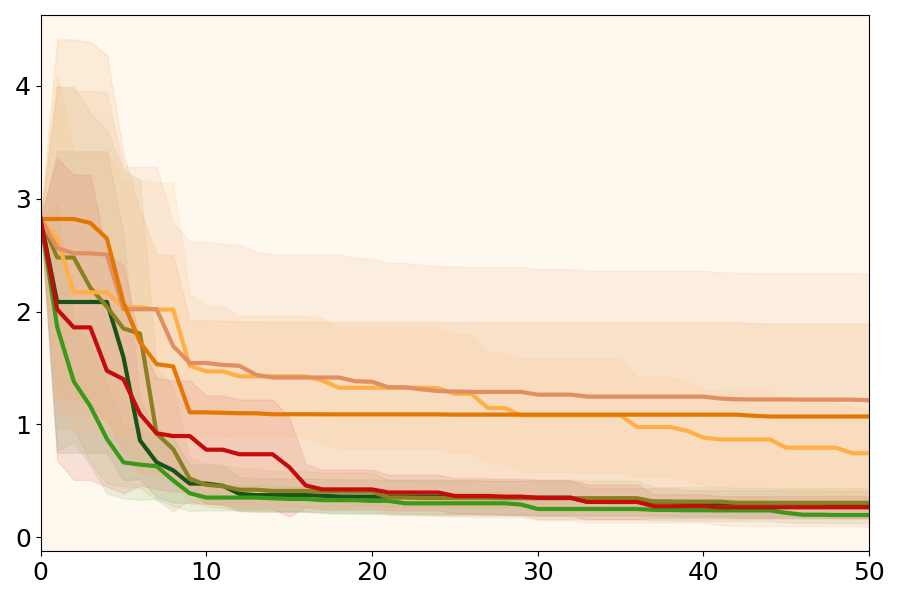}
         \caption{2D Rosenbrock}
         \label{fig:rosenbrock}
     \end{subfigure}
    \begin{subfigure}[b]{0.325\textwidth}
         \centering
         \includegraphics[width=\textwidth]{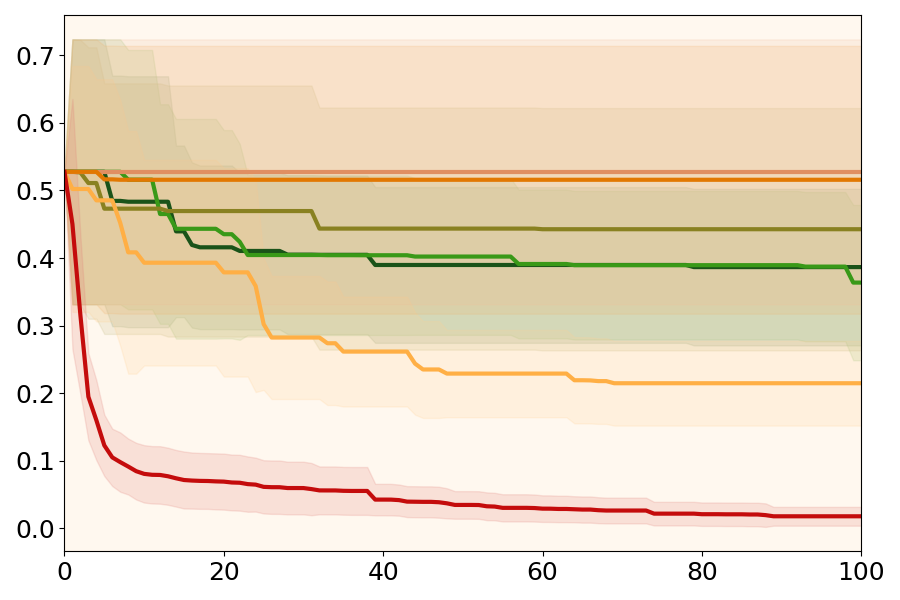}
         \caption{3D Hartmann}
         \label{fig:3D}
     \end{subfigure}
     \begin{subfigure}[b]{0.325\textwidth}
         \centering
         \includegraphics[width=\textwidth]{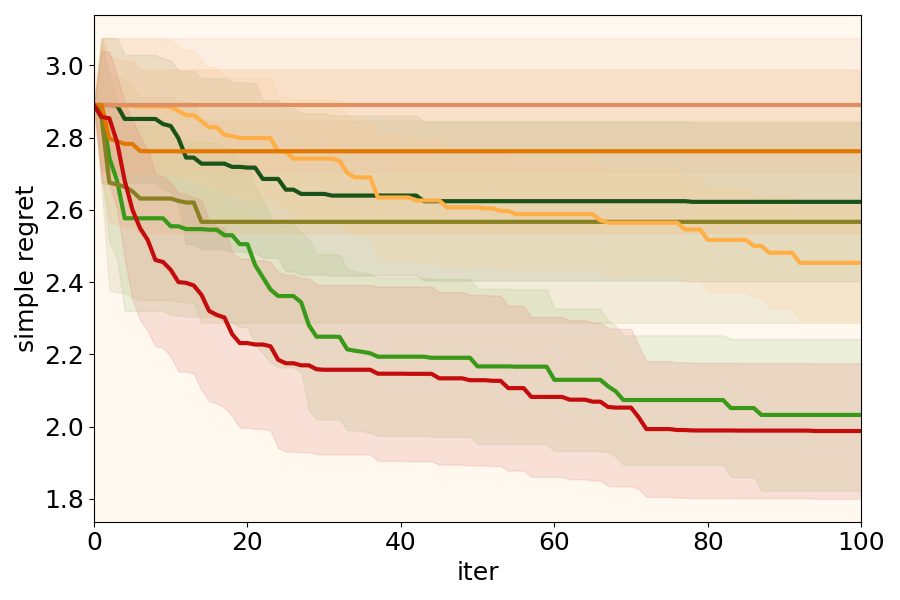}
         \caption{5D Michalewicz}
         \label{fig:5D}
     \end{subfigure}
     \begin{subfigure}[b]{0.325\textwidth}
         \centering
         \includegraphics[width=\textwidth]{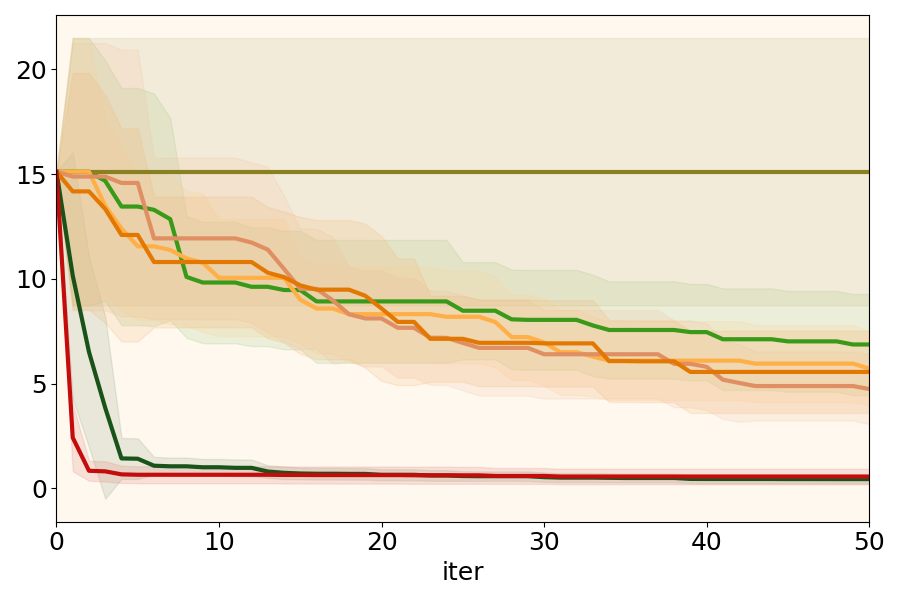}
         \caption{2D constrained KS224}
         \label{fig:constrained}
     \end{subfigure}
     \begin{subfigure}[b]{0.325\textwidth}
         \centering
         \includegraphics[width=\textwidth]{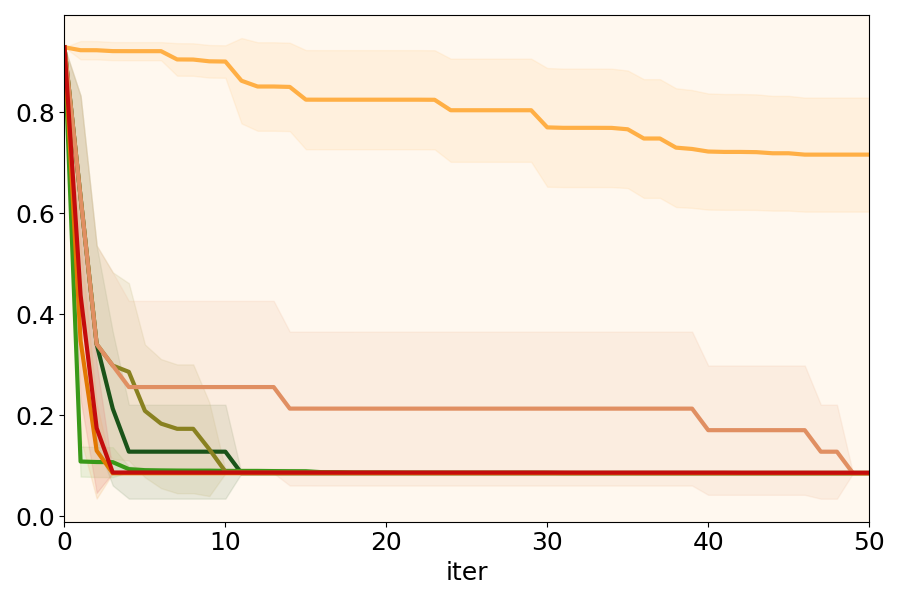}
         \caption{2D hyperparameter tuning}
         \label{fig:svm}
     \end{subfigure}
    \caption{Numerical results on Bayesian optimization using PK-MIQP with Mat{\'e}rn 3/2 kernel and the state-of-the-art minimizers. The mean with $0.5$ standard deviation of simple regret is reported over $20$ replications.}
    \label{fig:benchmark}
\end{figure*}

This section tests the performance of PK-MIQP with the Mat{\'e}rn 3/2 kernel in a full BO loop over the following functions (see Appendix \ref{app:exp_details} for their formulations):

\textbf{Unconstrained benchmarks:} Bumpy (1D), Multimodal (1D), Ackley (2D), Branin (2D), Rosenbrock (2D), Hartmannn (3D), and Michalewicz (5D). Most of these functions are commonly used as synthetic benchmarks in BO literature. Additionally, to highlight cases where intuitively there may be significant difference between choosing a global minima and a local minima of the acquisition function, we also select Bumpy and Multimodal as benchmarks. Bumpy is periodic with multiple local and global minima, and Multimodal has multiple local minima and a unique global minimum.

\textbf{Constrained benchmark:} KS224 (2D) function with $4$ linear constraints \citep{ks2024ks224}. This function is used to demonstrate the ability of PK-MIQP to handle additional constraints.

\textbf{SVM hyperparameter tuning:} As a real-world example with long evaluation times, we consider the hyperparameter tuning task using support vector machine (SVM) as a text classifier. The regularization parameter and kernel coefficient in SVM are set as the hyperparameters to be tuned. The objective is to maximize the 5-fold cross-validation score of the SVM. 

For the 3D and 5D benchmarks, we use add-GP training for all methods as in Section \ref{subsec:time_results}. Figures \ref{fig:3D} and \ref{fig:5D} show the results for the these cases. 

For the constrained benchmark, PK-MIQP can easily handle linear and quadratic constraints by adding them into formulation \eqref{eq:final_model}. For methods that cannot directly incorporate constraints, such as L-BFGS-B and Nelder-Mead, we add a penalty term to LCB on the constraint violation with a scaling parameter $\lambda$. We run the experiment three times with $\lambda$ set as $\{10, 100, 1000\}$ respectively. We pick the $\lambda$ setting that achieves the lowest regret at the end of BO, and report the results in Figure \ref{fig:constrained}. For hyperparameter tuning, we use the SVM implementation in scikit-learn \citep{Sklearn2011scikitlearn}, which is trained as a text classifier on the $20$ news group text dataset \citep{Lang1995news}.



Optimization results on the benchmark functions and real-world application are presented in Figure \ref{fig:benchmark}. BO with PK-MIQP demonstrates comparable performance to the state-of-the-art comparison methods on smooth functions, e.g., 1D multimoddal, 2D Branin and 2D Rosenbrock. On more difficult functions with numerous local minima, e.g., 1D Bumpy and 2D Ackley, PK-MIQP outperforms other methods considerably. This follows the intuition that gradient-based methods are easily trapped at local minima, and sampling-based methods can miss the global minima. PK-MIQP manages an (approximately) global optimization step. With add-GP training, PK-MIQP remains competitive performance. Note that, as dimensionality increases, sampling-methods such as COBYLA and Nelder-Mead can fail to improve through iterations. Figure \ref{fig:constrained} illustrates the results of a constrained benchmark, where again PK-MIQP is found to be effective in handling the known constraints. Finally, for the hyperparameter tuning problem, PK-MIQP achieves a cross-validation accuracy score at 91.4\% at the end of optimization, which outperforms most of other solvers.


\section{CONCLUSION}
    This work proposes PK-MIQP, a mixed-integer programming-based paradigm for \textit{global} optimization of GP-based acquisition functions. Our formulation introduces a piecewise-linear approximation for smooth GP kernels and a corresponding MIQP representation of acquisition functions. We analyze the theoretical regret bounds of PK-MIQP, and empirically demonstrate the framework on synthetic functions, constrained benchmarks, and a hyperparameter tuning task. 
    We hope this work demonstrates the potential of mixed-integer programming in BO settings. 
    Future work can improve and further scale PK-MIQP using tools from this community, e.g., cutting planes, tighter formulations, and/or branching rules. 

\section*{Acknowledgments}
The authors gratefully acknowledge support from a Department of Computing Scholarship (YX), BASF SE, Ludwigshafen am Rhein (SZ), a BASF/Royal Academy of Engineering Senior Research Fellowship (CT) and the US National Science Foundation grant 2237616 (JP). The authors also thank Alexander Thebelt, Akshay Kudva, Daniel Lengyel and Wei-Ting Tang for the helpful discussions throughout this work, as well as Jiongjian Cai for initial exploratory experiments.


\bibliographystyle{abbrvnat}
\bibliography{references}

\section*{Checklist}



 \begin{enumerate}

 \item For all models and algorithms presented, check if you include:
 \begin{enumerate}
   \item A clear description of the mathematical setting, assumptions, algorithm, and/or model. [Yes]
   \item An analysis of the properties and complexity (time, space, sample size) of any algorithm. [Yes]
   \item (Optional) Anonymized source code, with specification of all dependencies, including external libraries. [Yes]
 \end{enumerate}

 \item For any theoretical claim, check if you include:
 \begin{enumerate}
   \item Statements of the full set of assumptions of all theoretical results. [Yes]
   \item Complete proofs of all theoretical results. [Yes]
   \item Clear explanations of any assumptions. [Yes]     
 \end{enumerate}

 \item For all figures and tables that present empirical results, check if you include:
 \begin{enumerate}
   \item The code, data, and instructions needed to reproduce the main experimental results (either in the supplemental material or as a URL). [Yes]
   \item All the training details (e.g., data splits, hyperparameters, how they were chosen). [Yes]
    \item A clear definition of the specific measure or statistics and error bars (e.g., with respect to the random seed after running experiments multiple times). [Yes]
    \item A description of the computing infrastructure used. (e.g., type of GPUs, internal cluster, or cloud provider). [Yes]
 \end{enumerate}

 \item If you are using existing assets (e.g., code, data, models) or curating/releasing new assets, check if you include:
 \begin{enumerate}
   \item Citations of the creator If your work uses existing assets. [Yes]
   \item The license information of the assets, if applicable. [Not Applicable]
   \item New assets either in the supplemental material or as a URL, if applicable. [Not Applicable]
   \item Information about consent from data providers/curators. [Not Applicable]
   \item Discussion of sensible content if applicable, e.g., personally identifiable information or offensive content. [Not Applicable]
 \end{enumerate}

 \item If you used crowdsourcing or conducted research with human subjects, check if you include:
 \begin{enumerate}
   \item The full text of instructions given to participants and screenshots. [Not Applicable]
   \item Descriptions of potential participant risks, with links to Institutional Review Board (IRB) approvals if applicable. [Not Applicable]
   \item The estimated hourly wage paid to participants and the total amount spent on participant compensation. [Not Applicable]
 \end{enumerate}

 \end{enumerate}

\onecolumn
\appendix

\section{Extended Proofs}\label{app:full_proof}

\begin{proof}[Proof of Theorem 4.3]
Denote $\psi:=K_{xX}-\tilde K_{xX}$ and $\Psi:=K_{XX}-\tilde K_{XX}$. Since the difference between the true and approximated kernel functions is bounded by $\epsilon_M$ (see Remark 4.1), we can obtain:
\begin{equation*}
    \begin{aligned}
        \|\psi\|_2\le \sqrt{N}\epsilon_M,~\|\Psi\|_2\le N\epsilon_M
    \end{aligned}
\end{equation*}
where we use the fact that $\|\Psi\|_2\le N\|\Psi\|_{\max}$ and $\|\Psi\|_{\max}\le \epsilon_M$.

By the definition of $\mu(\bm{x})$ and $\tilde\mu(\bm{x})$, 
    \begin{equation*}
        \begin{aligned}
            \mu(\bm{x})-\tilde\mu(\bm{x}) 
            &= K_{xX}K_{XX}^{-1}\bm{y}-\tilde{K}_{xX}\tilde{K}_{XX}^{-1}\bm{y} \\
            &= (\tilde{K}_{xX}+\psi)K_{XX}^{-1}\bm{y}- \tilde{K}_{xX}\tilde{K}_{XX}^{-1}\bm{y} \\
            &=\tilde{K}_{xX}K_{XX}^{-1}\bm{y}+\psi K_{XX}^{-1}\bm{y}-\tilde{K}_{xX}\tilde{K}_{XX}^{-1}\bm{y} \\
            &=\tilde{K}_{xX}(K_{XX}^{-1}-\tilde{K}_{XX}^{-1}) \bm{y}+\psi K_{XX}^{-1}\bm{y} \\
            &= \tilde{K}_{xX}K_{XX}^{-1}(\tilde{K}_{XX}-K_{XX}) \tilde{K}_{XX}^{-1}\bm{y}+\psi K_{XX}^{-1}\bm{y} \\
            &= -\tilde{K}_{xX}K_{XX}^{-1}\Psi\tilde{K}_{XX}^{-1} \bm{y}+\psi K_{XX}^{-1}\bm{y}
        \end{aligned}
    \end{equation*}
where we use the property $A^{-1}-B^{-1}=A^{-1}(B-A)B^{-1}$ for two invertible matrices $A$ and $B$.

Note that $\|\tilde K_{xX}\|_2\le \sqrt{N}\sigma_f^2$ and $\|\bm{y}\|_2\le \sqrt{N}$ (recall our assumption that $f(x)\in [0,1]$), then we have:
\begin{equation*}
    \begin{aligned}
        |\mu(\bm{x})-\tilde\mu(\bm{x})|
        &\le \|\tilde K_{xX}\|_2\|K_{XX}^{-1}\|_2\|\Psi\|_2\|\tilde K_{XX}^{-1}\|_2\|\bm{y}\|_2+\|\psi\|_2\|K_{XX}^{-1}\|_2\|\bm{y}\|_2\\
        &\le N\|K_{XX}^{-1}\|_2(N\sigma_f^2\|\tilde K_{XX}^{-1}\|_2+1)\epsilon_M\\
        &\le C_{\mu}N^2\epsilon_M
    \end{aligned}
\end{equation*}
where the constant $C_{\mu}=\sigma^{-4}_\epsilon \sigma^2_{\max}$ and $\sigma_{\max}^2$ is the upper bound of  kernel variance $\sigma_f^2$. Note we ignore the small term since $N \gg 1$ and only consider the dominating term (the first term in this case).

Similarly, for the difference between variance $\sigma^2(\bm{x})$ and $\tilde\sigma^2(\bm{x})$, we have:
\begin{equation*}
    \begin{aligned}
        \sigma^2(\bm{x})-\tilde\sigma^2(\bm{x})
        &=K_{xx}-K_{xX}K_{XX}^{-1}K_{Xx}-\tilde K_{xx}+\tilde{K}_{xX}\tilde{K}_{XX}^{-1}\tilde{K}_{Xx} \\
        &=\tilde{K}_{xX}\tilde{K}_{XX}^{-1}\tilde{K}_{Xx}-K_{xX}K_{XX}^{-1}K_{Xx}  \\
        &=\tilde{K}_{xX}\tilde{K}_{XX}^{-1}\tilde{K}_{Xx}-(\tilde{K}_{xX}+\psi)K_{XX}^{-1}(\tilde{K}_{Xx}+\psi^T)\\
        &=\tilde{K}_{xX}\tilde{K}_{XX}^{-1}\tilde{K}_{Xx}-\tilde{K}_{xX}K_{XX}^{-1}\tilde{K}_{Xx}-2 \tilde{K}_{xX}K_{XX}^{-1}\psi^T-\psi K_{XX}^{-1}\psi^T \\
        &=\tilde{K}_{xX}(\tilde{K}_{XX}^{-1}-K_{XX}^{-1}) \tilde{K}_{Xx}-2\tilde{K}_{xX}K_{XX}^{-1}\psi^T-\psi K_{XX}^{-1}\psi^T\\
        &=\tilde{K}_{xX}K_{XX}^{-1}\Psi\tilde{K}_{XX}^{-1} \tilde{K}_{Xx}-2\tilde{K}_{xX}K_{XX}^{-1}\psi^T-\psi K_{XX}^{-1}\psi^T
    \end{aligned}
\end{equation*}
where $K_{xx}=\tilde K_{xx}=\sigma_f^2$ in our approximation, and we reuse equation $\tilde{K}_{XX}^{-1}-K_{XX}^{-1}=K_{XX}^{-1}\Psi\tilde{K}_{XX}^{-1}$.

Then we have:
\begin{equation*}
    \begin{aligned}
        |\sigma^2(\bm{x})-\tilde\sigma^2(\bm{x})|
        &\le \|\tilde{K}_{xX}\|_2\|K_{XX}^{-1}\|_2\|\Psi\|_2\|\tilde{K}_{XX}^{-1}\|_2 \|\tilde{K}_{Xx}\|_2+2\|\tilde{K}_{xX}\|_2\|K_{XX}^{-1}\|_2\|\psi\|_2+\|K_{XX}^{-1}\|_2\|\psi\|_2^2\\
        &\le N\|K_{XX}^{-1}\|_2(N\sigma_f^4\|\tilde K_{XX}^{-1}\|_2+2\sigma_f^2+\epsilon_M)\epsilon_M\\
        & \le C^2_\sigma N^2\epsilon_M
    \end{aligned}
\end{equation*}
where the constant $C_\sigma=\sigma^{-2}_\epsilon\sigma^2_{\max}$. 
Note that we again only consider the dominating term (the first term again).

Consider the following two cases:

\textbf{Case 1:} if $\max(\sigma(\bm {x}),\tilde\sigma(\bm {x}))\le C_\sigma N\epsilon^{1/2}_M$, then:
\begin{equation*}
    \begin{aligned}
        |\sigma(\bm {x})-\tilde\sigma(\bm {x})|
        =\max(\sigma(\bm {x}),\tilde\sigma(\bm {x}))-\min(\sigma(\bm {x}),\tilde\sigma(\bm {x}))
        \le \max(\sigma(\bm {x}),\tilde\sigma(\bm {x}))
        \le C_\sigma N\epsilon^{1/2}_M
    \end{aligned}
\end{equation*}
since $\min(\sigma(\bm {x}),\tilde\sigma(\bm {x}))\ge 0$.

\textbf{Case 2:} if $\max(\sigma(\bm {x}),\tilde\sigma(\bm {x}))>C_\sigma N\epsilon^{1/2}_M$, then:
\begin{equation*}
    \begin{aligned}
        |\sigma(\bm {x})-\tilde\sigma(\bm {x})|
        =\frac{|\sigma^2(\bm {x})-\tilde\sigma^2(\bm {x})|}{\sigma(\bm {x})+\tilde\sigma(\bm {x})}
        \le \frac{|\sigma^2(\bm {x})-\tilde\sigma^2(\bm {x})|}{\max(\sigma(\bm {x}),\tilde\sigma(\bm {x}))}
        \le C_\sigma N\epsilon^{1/2}_M
    \end{aligned}
\end{equation*}
Therefore, we conclude that:
\begin{equation*}
    \begin{aligned}
        |\sigma(\bm {x})-\tilde\sigma(\bm {x})|\le C_\sigma N\epsilon^{1/2}_M
    \end{aligned}
\end{equation*}
\end{proof}

\begin{proof}[Proof of Theorem 4.4]
    Except for the fact that $\bm{x_t}$ is chosen by minimizing the approximated LCB instead of the true LCB, all conditions of this theorem are the same as in Lemma 5.8 of \citet{Srinivas2012GPregret}. Therefore, the conclusions of Lemma 5.5 and Lemma 5.7 in \citet{Srinivas2012GPregret} still hold here, and we first restate them without repeating proofs for simplicity:
    \begin{equation*}
        \begin{aligned}
            |f(\bm {x_t}) - \mu_{t-1}(\bm{x_t})|&\le \beta_t^{1/2}\sigma_{t-1}(\bm{x_t})\\
            |f(\bm{x^*})-\mu_{t-1}(\bm{[x^*]_t})|&\le\beta_{t-1}(\bm{[x^*]_t})+1/t^2
        \end{aligned}
    \end{equation*}
    where $\bm{[x^*]_t}$ is the closest point in $D_t$ to $\bm{x^*}$, and $D_t\subset D$ is a discretization at $t$-th iteration.

    Recall the definition of $\bm{x_t}$, we have:
    \begin{equation*}
        \begin{aligned}
            \tilde \mu_{t-1}(\bm {x_t}) - \beta_t^{1/2}\tilde\sigma_{t-1}(\bm{x_t})\le \tilde \mu_{t-1}(\bm{[x^*]_t}) - \beta_t^{1/2}\tilde\sigma_{t-1}(\bm{[x^*]_t})
        \end{aligned}
    \end{equation*}
    Combining those three inequalities with our inequalities from Theorem 4.3 gives
    \begin{equation*}
        \begin{aligned}
            r_t&=f(\bm{x_t})-f(\bm{x^*})\\
            &\le \mu_{t-1}(\bm{x_t})+\beta_t^{1/2}\sigma_{t-1}(\bm{x_t})-\mu_{t-1}(\bm{[x^*]_t})+\beta_t^{1/2}\sigma_{t-1}(\bm{[x^*]_t})+1/t^2\\
            &\le \tilde\mu_{t-1}(\bm{x_t})+\beta_t^{1/2}\tilde\sigma_{t-1}(\bm{x_t})-\tilde\mu_{t-1}(\bm{[x^*]_t})+\beta_t^{1/2}\tilde\sigma_{t-1}(\bm{[x^*]_t})+1/t^2+2C_{\mu}t^2\epsilon_{M_t}+2C_\sigma \beta_t^{1/2}t\epsilon^{1/2}_{M_t}\\
            &\le 2\beta_t^{1/2}\tilde\sigma_{t-1}(\bm{x_t})+1/t^2+2C_{\mu}t^2\epsilon_{M_t}+2C_\sigma \beta_t^{1/2}t\epsilon^{1/2}_{M_t}\\
            &\le 2\beta_t^{1/2}\sigma_{t-1}(\bm{x_t})+1/t^2+2C_{\mu}t^2\epsilon_{M_t}+4C_\sigma \beta_t^{1/2}t\epsilon^{1/2}_{M_t}
        \end{aligned}
    \end{equation*}
\end{proof}

\section{Experimental Implementation Details}\label{app:exp_details}

\subsection{Problem setup}
For our experiments, the GP models, including the additive GPs, are implemented with a Mat{\'e}rn 3/2 kernel using the GPflow package \citep{Matthews2017GPflow}. In our implementation, kernel variance $\sigma_f^2$ is bounded to be within $[0.05, 20]$ and kernel lengthscale $l$ is bounded to be within $[0.005, 20]$. The GP parameters are optimized over 10 multiple starts, with random initial values for kernel parameters. The set of parameters with minimal negative log likelihood is then selected. We take the noise term $\sigma_{\epsilon}^2 = 10^{-6}$.

For PK-MIQP, all MIQPs are solved using Gurobi v11.0.0 \citep{gurobi2024}. We list relevant solver hyperparameters in Table \ref{tab:Gurobi} and use default values for other hyperparameters. 

\begin{table}[ht!]
    \centering
     \caption{Gurobi hyperparameters.}
    \label{tab:Gurobi}
    \begin{tabular}{cc}
        \toprule
        Parameter & Value \\
        \midrule
        \href{https://www.gurobi.com/documentation/current/refman/nonconvex.html}{\texttt{NonConvex}} & 2 \\
        \href{https://www.gurobi.com/documentation/current/refman/objscale.html}{\texttt{ObjScale}} & 0.5 \\
        \href{https://www.gurobi.com/documentation/current/refman/scaleflag.html}{\texttt{ScaleFlag}} & 1 \\
        \href{https://www.gurobi.com/documentation/current/refman/mipgap2.html}{\texttt{MIPgap}} & 0.5\\
        \href{https://www.gurobi.com/documentation/current/refman/poolsolutions.html}{\texttt{PoolSolutions}} & 10 \\
        \href{https://www.gurobi.com/documentation/current/refman/timelimit.html}{\texttt{TimeLimit}} & 5400\\
        \bottomrule
    \end{tabular}
\end{table}

As mentioned in the ``Initialization Heuristic'' paragraph of Section 4.2, PK-MIQP first solves a sub-problem to initialize the full-problem. The sub-problem drops all variance-related terms, resulting in
\begin{equation*}\label{eq:sub_model}
    \begin{aligned}
        \min\limits_{x\in\mathcal X}~&
        \tilde{\mu}\\
        s.t.~& \tilde{\mu} = \tilde{K}_{xX}\tilde{K}_{XX}^{-1}\bm{y}\\
        &r_i^2=\frac{\|\bm{x}-\bm{x_i}\|_2^2}{l^2},~\forall 1\le i\le N\\
        &\tilde K_{xX_i}=\tilde k(r_i),~\forall 1\le i\le N
    \end{aligned}
\end{equation*}
For sub-problem, we set \texttt{TimeLimit} to $1800$(s); most are solved relatively quickly.

We use scipy \citep{Scipy2020sciPy} to implement state-of-the-art optimizers for comparison, including:

\textbf{L-BFGS-B:} Limited-memory Broyden–Fletcher–Goldfarb–Shanno Bound (L-BFGS-B) \citep{Zhu1997LBFGS} algorithm is a gradient-based solver for minimizing differentiable function. L-BFGS-B uses the first derivative as well as an estimate of the inverse of Hessian matrix to steer the search. It doesn't support constrained minimization problems.

\textbf{COBYLA:} Constrained Optimization BY Linear Approximation (COBYLA) \citep{Powell1994cobyla} algorithm is a derivative-free solver for minimizing scalar functions. COBYLA updates a linear approximation of both the objective and constraints and performs a simplex method within the trust region to solve the problem. It supports constrained minimization problems.

\textbf{trust-constr:} Trust-region algorithm for constrained optimization (trust-constr) \citep{Byrd1999trustconstr} is a gradient-based solver that applies the interior point algorithm to minimize a given function. Both gradients and Hessians are approximated during the solving. It supports constrained minimization problems. 

\textbf{Nelder-Mead:} Nelder-Mead algorithm \citep{Nelder1965neldermead} is a derivative-free method for solving multidimensional optimization problems. It uses a simplex algorithm based on function comparison and doesn't support constrained minimization problems.

\textbf{SLSQP:} Sequential Least SQuares Programming (SLSQP) \citep{Kraft1988slsqp} algorithm is a gradient-based optimizer. It uses the Han-Powell quasi-Newton method combined with BFGS to solve a Lagrange function at each iteration. It supports constrained minimization problems. 

For these methods, the tolerance of termination (\texttt{tol}) is set to $10^{-6}$, and the maximum number of iterations (\texttt{maxiter}) is set to $1000$. The starting point (\texttt{x0}) for applicable methods is set to $\bm{0}$.

\subsection{Benchmarks}
In this section, we provide the formulations for all benchmark functions used in our experiments.

\textbf{Bumpy (1D):}  
\begin{equation}\tag{Bumpy}
    \begin{aligned}
        \min~& -\sum_{i=1}^{6} i \cdot \sin{((i+1) x + i)}\\
    s.t.~&x\in[-10,10]
    \end{aligned}
\end{equation}
\textbf{Multimodal (1D):}
\begin{equation}\tag{Multimodal}
    \begin{aligned}
        \min~& \sin{x} + \sin{\left(\frac{10}{3} x\right)}\\
    s.t.~&x\in[-2.7, 7.5]
    \end{aligned}
\end{equation}
\textbf{Ackley (2D)}
\begin{equation}\tag{Ackley}
    \begin{aligned}
        \min~& -20 \exp{\left(-0.2\sqrt{0.5(x_1^2+x_2^2)}\right)}-\exp{(0.5 (\cos{(2\pi x_1))}+\cos{(2\pi x_2)})}+20+\exp{(1)}\\
        s.t.~& x_1, x_2 \in [-32, 16]
    \end{aligned}
\end{equation}
\textbf{Branin (2D)}
\begin{equation}\tag{Branin}
    \begin{aligned}
        \min~& a(x_2-bx_1^2+cx_1-r)^2+s(1-t)\cos{(x_1)}+s\\
        s.t.~& x_1 \in [-5, 10], x_2 \in [0, 15]
    \end{aligned}
\end{equation}
where $a=1,b=5.1/(4\pi^2),c=5/\pi,r=6,s=10,t=1/(8\pi)$.

\textbf{Rosenbrock (2D)}
\begin{equation}\tag{Rosenbrock}
    \begin{aligned}
        \min~& (1-x_1)^2+100(x_2-x_1^2)^2\\
        s.t.~& x_1 \in [-2, 2], x_2 \in [-1, 3]
    \end{aligned}
\end{equation}

\textbf{Hartmann (3D)}
\begin{equation}\tag{Hartmann}
    \begin{aligned}
        \min~& -\sum_{i = 1}^4 \alpha_i \exp{\left(- \sum_{j = 1}^3 A_{ij} (x_j - P_{ij})^2\right)}\\
        s.t.~& x_1, x_2, x_3 \in [0, 1]
    \end{aligned}
\end{equation}
where $\alpha=(1, 1.2, 3, 3.2)^T$ and
\begin{equation*}
    \begin{aligned}
        A = 
            \begin{pmatrix}
                3 & 10 & 30\\
                0.1 & 10 & 35\\
                3 & 10 & 30 \\
                0.1 & 10 & 35
            \end{pmatrix},~
        P = 10^{-4}
            \begin{pmatrix}
                3689 & 1170 & 2673\\
                4699 & 4387 & 7470\\
                1091 & 8732 & 5547\\
                381 & 5743 & 8828
            \end{pmatrix}
    \end{aligned}
\end{equation*}

\textbf{Michalewicz (5D)}
\begin{equation}\tag{Michalewicz}
    \begin{aligned}
        \min~& -\sum_{i=1}^5\sin{(x_i)}\sin^{20}{\left(\frac{i x_i^2}{\pi}\right)}\\
        s.t.~& x_i \in [0, \pi], i=1, \cdots, 5
    \end{aligned}
\end{equation}

\textbf{Constrained KS224 (2D)} For thise case, we solve the following constrained minimization problem with the 2D KS224 function \citep{ks2024ks224} as the objective.

\begin{equation}\tag{Constrained KS224}
    \begin{aligned}
        \min~& 2 x_1^2 + x_2^2 - 48 x_1 - 40 x_2\\
        s.t.~& -(x_1 + 3 x_2) \leq 0\\
         &-(18 - x_1 - 3 x_2) \leq 0\\
         &-(x_1 + x_2) \leq 0\\
         &-(8 - x_1 - x_2) \leq 0\\
         &~x_1, x_2 \in [0, 6]
    \end{aligned}
\end{equation}

\textbf{SVM hyperparameter tuning:} The 20 newsgroup dataset \citep{Lang1995news} is a classic dataset consisting of posts on 20 topics for text classification. We implement a simple pipeline for this text classification in scikit-learn \citep{Sklearn2011scikitlearn} comprising first using a TF-IDF vectorizer to convert text to vectors and then applying C-Support Vector Classification (SVC). We use the default settings for hyperparameters in SVC except leaving the regularization hyperparameter $C \in [0.01, 1000]$ and kernel coefficient $\gamma \in [0.01, 1000]$ to be tuned. Negative 5-fold cross validation accuracy is set as the objective to be minimized.

\section{Numerical results using RBF kernel}\label{app:rbf}
We perform the same single acquisition function optimization experiment as in Section \ref{subsec:BO_one_step}, but using the RBF kernel instead of the Matern 3/2 kernel. Table \ref{tab:one_iter_rbf} reports the mean of the optimal LCB values found by different optimizers, along with half of the standard deviation. For Bayesian optimization, we empirically demonstrate the performance of PK-MIQP with RBF kernel in two benchmarks as shown in Figure \ref{fig:rbf}.

\begin{table*}[h]
    \centering
    \caption{Comparison of solvers on optimizing random acquisition functions using RBF kernel. The mean of the optimal LCB values found over the 20 replications is reported with 0.5 standard deviation in parentheses. PK-MIQP consistently outperforms other gradient- and sampling-based methods.}
    \label{tab:one_iter_rbf}
    \begin{tabular}{cccccc}
    \toprule
    method & 1D & 2D & 3D & 4D & 5D \\
    \midrule
    L-BFGS-B & -0.67(0.68) & -0.90(0.42) & -2.03(1.85) & -2.77(1.45) & -2.26(1.31)\\
    Nelder-Mead & -0.98(0.63) & -0.87(0.67) & -1.53(0.74) & -2.70(1.43) & -2.12(1.38)\\
    COBYLA & -1.25(0.82) & -1.44(0.60) & -2.18(1.83) & -1.47(0.99) & -2.49(1.37)\\
    SLSQP & -1.04(1.00) & -0.72(0.41) & -2.34(1.83) & -2.53(1.49) & -2.28(1.29)\\
    trust-constr & -0.57(0.68) & -1.27(0.38) & -1.90(0.70) & -3.06(1.56) & -2.89(1.32)\\
    PK-MIQP & \textbf{-1.82(0.71)} & \textbf{-1.99(0.49)} & \textbf{-2.94(0.65)} & \textbf{-4.20(1.87)} & \textbf{-3.10(1.28)}\\
    \bottomrule
    \end{tabular}
\end{table*}

\begin{figure*}[h]
    \centering
    \begin{subfigure}[b]{0.45\textwidth}
         \centering
         \includegraphics[width=\textwidth]{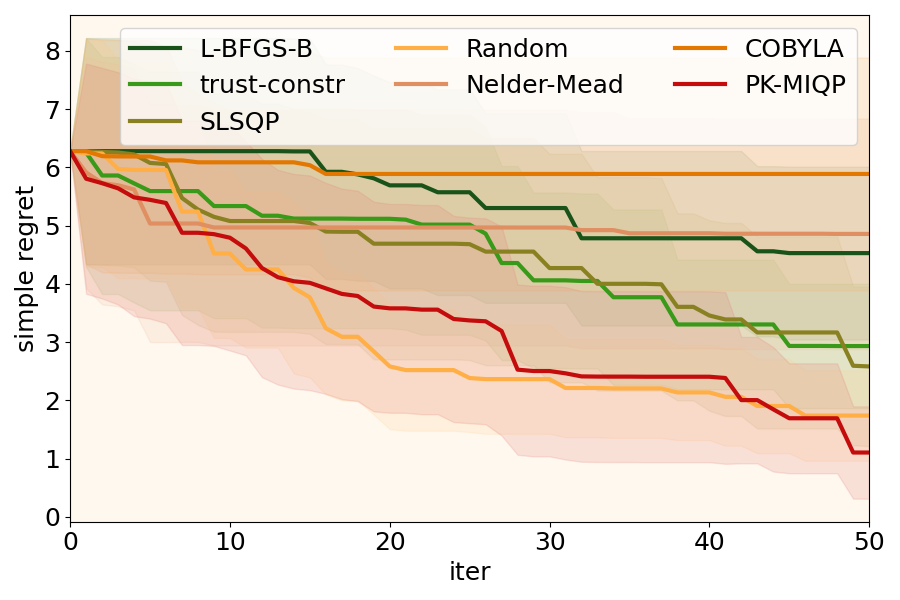}
         \caption{1D Bumpy}
         \label{fig:bumpy}
     \end{subfigure}
     \begin{subfigure}[b]{0.45\textwidth}
         \centering
         \includegraphics[width=\textwidth]{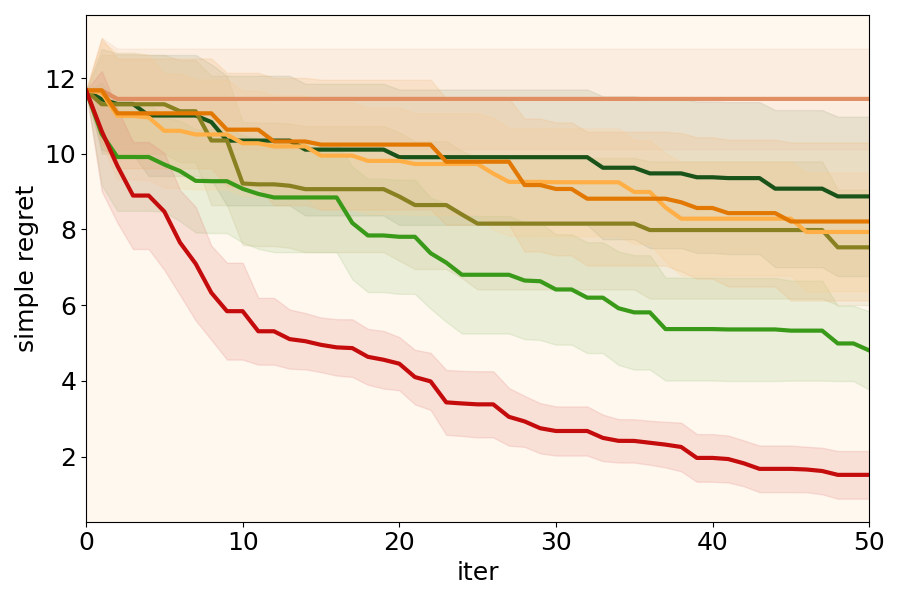}
         \caption{2D Ackley}
         \label{fig:ackley}
     \end{subfigure}
    \caption{Numerical results on Bayesian optimization using PK-MIQP with RBF kernel and the state-of-the-art minimizers. The mean with $0.5$ standard deviation of simple regret is reported over $20$ replications. PK-MIQP is similarly applicable to the RBF kernel and again outperforms other minimizers.}
    \label{fig:rbf}
\end{figure*}

\end{document}